\newtheorem{theorem}{Theorem}[section]
\newtheorem{proposition}[theorem]{Proposition}
\newtheorem{lemma}[theorem]{Lemma}
\newtheorem{corollary}[theorem]{Corollary}
\theoremstyle{definition}
\newtheorem{remark}[theorem]{Remark}
\newtheorem{examples}[theorem]{Examples}
\newcommand{\m}{^{\times}}
\newcommand{\inv}{^{-1}}
\newcommand{\Ad}{\operatorname{Ad}}
\newcommand{\ind}{\operatorname{ind}}
\newcommand{\Nrd}{\operatorname{Nrd}}
\newcommand{\Int}{\operatorname{Int}}
\newcommand{\ad}{\operatorname{ad}}
\newcommand{\ord}{\operatorname{ord}}
\newcommand{\gr}{\operatorname{\sf gr}}
\newcommand{\disc}{\operatorname{disc}}
\newcommand{\Z}{\mathbb{Z}}
\newcommand{\N}{\mathbb{N}}
\newcommand{\cA}{\mathcal A}
\newcommand{\cB}{\mathcal B}
\newcommand{\cC}{\mathcal C}
\numberwithin{equation}{section}
\title{Involutions on tensor products of quaternion algebras}
\keywords{Central simple algebra, Quaternion algebra, Involution, Valuation, Gauge, Armature}
\subjclass[2010]{Primary 16W10; Secondary 16K20, 16W60, 16W70, 11E39}
\date{}
\author[D. Barry]{Demba  Barry}
\address{DER de Math\'ematiques et Informatique, Universit\'e des Sciences des Techniques et des Technologies de Bamako, BP: E3206 Bamako, Mali}
\email{barry.demba@gmail.com}
\begin{document}
\maketitle
\begin{abstract}
We study possible decompositions of totally decomposable algebras with involution, that is, tensor products of quaternion algebras with involution. In particular, we are interested in  decompositions  in which one or several factors are the split quaternion algebra $M_2(F)$, endowed with an orthogonal involution. Using the theory of gauges, developed by Tignol-Wadsworth, we construct examples of algebras isomorphic to a tensor product of quaternion algebras with $k$ split factors, endowed with an involution which is totally decomposable, but does not admit any decomposition with $k$ factors $M_2(F)$ with involution. This extends an earlier result of Sivatski where the algebra considered is of degree $8$ and index $4$, and endowed with some orthogonal involution. 
\end{abstract}
\section{Introduction}\label{section1}
Let $F$ be a field of characteristic different from $2$. In ~\cite{Mer81}, Merkurjev showed that a central simple $F$-algebra of exponent $2$ is always  Brauer-equivalent to a tensor product of quaternion algebras. In degree $8$, this result  was first proved by Tignol (see for instance ~\cite[Thm. 5.6.38]{Jac96}). More precisely, let $A$ be a central simple $F$-algebra of degree $8$ and exponent $2$. Tignol showed that $M_2(A)$ is isomorphic to a tensor product of four quaternion algebras. Moreover, if $A$ is indecomposable (i.e, $A$ does not contain any quaternion subalgebra, see~\cite{ART79}), one may check that $M_2(A)$ does not admit any decomposition as a tensor product of  quaternion algebras in which one  the quarternion factors is split, that is isomorphic to $M_2(F)$. In this paper,  we are interested in the analogue question for  algebras with involution.

A central simple algebra $A$ of exponent $2$ is called  \emph{totally decomposable} if $A\simeq Q_1\otimes\cdots\otimes Q_m$, for some quaternion algebras $Q_i$.  The set  $\{Q_1,\ldots, Q_m\}$ is called a set of factors of $A$. Moreover, we say that $A$ is totally decomposable with $k$ split factors if it admits a set of factors containing $k$ split factors $M_2(F)$. This implies that the \emph{co-index} of $A$, which is the ratio $\deg A/\ind A$ is a multiple of $2^k$; but as explained before, this co-index condition is not sufficient for totally decomposable algebra to admit a decomposition with $k$ split factors. Assume now that $A$ is endowed with an $F$-linear involution $\sigma$. We say that $(A,\sigma)$ is totally decomposable if $(A, \sigma)\simeq (Q_1,\sigma_1)\otimes\cdots\otimes (Q_m,\sigma_m)$ and totally decomposable with $k$ split factors if it admits such a decomposition with $k$ factors isomorphic to $M_2(F)$ endowed with some $F$-linear involution. The main question we are interested in is the following: let $A$ be a central simple algebra of  co-index $2^k$ such that $A$ is totally decomposable with $k$ split factors. Let $\sigma$ be an anisotropic involution on $A$. \emph{If $(A,\sigma)$ is totally decomposable, is it totally decomposable with $k$ split factors}?

As we now proceed to explain,  a complete answer is known is the orthogonal case. Let $(A,\sigma)$ be a totally decomposable algebra with orthogonal involution of degree $2^m$.  If $A$ is split, Becher showed in ~\cite{Bec08} that $(A,\sigma)\simeq (M_{2^m}(F), \ad_\pi)$ where $\pi$ is an $m$-fold Pfister form over $F$ and $\ad_\pi$ is the involution  on $M_{2^m}(F)$ that is adjoint to the polar form of $\pi$. By definition of Pfister forms, it follows that  $(A,\sigma)$ is totally decomposable with $m$ split factors. This result is known in the literature as the Pfister Factor Conjecture (see ~\cite[Chap. 9]{Sha00}). Assume that $(A,\sigma)$ is of co-index $2^{m-1}$, i.e, the algebra $A$ is Brauer-equivalent to a quaternion algebra $H$. Again a positive answer is given  in ~\cite{Bec08}:  $(A,\sigma)\simeq (H,\sigma_H)\otimes (M_{2^{m-1}}(F), \ad_{\pi'})$ for some orthogonal involution $\sigma_H$ on $H$ and some $(m-1)$-fold Pfister form $\pi'$ over $F$. In higher index, a counter example is provided by Sivatski in ~\cite[Prop. 5]{Siv05}. He produces an algebra with orthogonal involution of degree $8$ and co-index $2$ which is totally decomposable, but not totally decomposable with a split factor. Observe that the algebra in this example is isomorphic to $M_2(D)$ for some biquaternion algebra $D$. Hence, it is totally decomposable with a split factor. Starting from Sivatski's example, we construct examples of algebras with orthogonal involution, of index $2^r$ and co-index $2^k$, where $r\geq 2$ and $k\geq 1$ are arbitrary, that are not totally decomposable with $k$ split factors as algebras with involution, even though the underlying algebra does admit a total decomposition with $k$ split factors (see Remark \ref{rem5.1}). Those algebras have center some iterated Laurent power series fields, and the theory of gauges play a crucial role in the proofs.

Assume now that the involution is of symplectic type. We give a partial answer to our main question in this case. Since we assume $\sigma$ is anisotropic, the index of $A$ is at least $2$. In index $2$, Becher showed in ~\cite{Bec08} that $(A,\sigma)\simeq  (H',\gamma)\otimes (M_{2^{m-1}}(F), \ad_{\pi''})$ where $H'$ is some quaternion algebra, $\gamma$ is the canonical involution on $H'$ and $\pi''$ is some $(m-1)$-fold Pfister form over $F$. In degree $8$ and index $4$, we prove that $(A,\sigma)$ is totally decomposable with $1$ split factor (see Prop.~\ref{degree8}). Finally, in index $2^r$ and co-index $2^k$ for arbitrary $r\geq 3$ and $k\geq 1$, we give examples of $(A,\sigma)$ such that $A$ is totally decomposable with $k$ split factors but $(A, \sigma)$ is not totally decomposable with $k$ split factors (see Remark \ref{rem5.1}). 

The paper is organized as follows:  in Section \ref{deg8} we deal with the degree $8$ algebras with involution. In particular, we show that if $(A,\sigma)$ is a central simple algebra of degree $8$ and co-index  $2^k$ (where $k\le 2$) with a symplectic involution then $(A,\sigma)$ is totally decomposable with $k$ split factors (see Prop. \ref{degree8}). Section \ref{gauge} collects preliminary results on gauges, which are the main tool in the remaining part of the paper.  We give in Sections \ref{fixIndex} and \ref{largeIndex} the proof of our main results and the examples that extend Sivatski's result to algebras with involution of arbitrarily large co-index (see Cor. \ref{cor4.1} and Remark \ref{rem5.1}) and to algebras with involution of arbitrarily large index (see Cor. \ref{cor5.1} and Remark \ref{rem5.1}).  Corollaries \ref{cor4.1} and \ref{cor5.1} are the immediate consequences of the main Theorems \ref{thm2.0} and \ref{thm2.1} below.
\subsection{Statement of main results}\label{statement}
Throughout this paper the characteristic of the base field $F$ is assumed to be different from $2$ and all algebras are associative and finite-dimensional over their centers.
We will use freely the standard terminology and notation from the theory of finite-dimensional algebras, the theory of valuations on division algebras and the theory of involutions on central simple algebras. For these, as well as background information, we refer the reader to ~\cite{Pie82} and ~\cite{KMRT98}.

Let us fix some notations: let $F$ be a field, consider the fields  
\[
K= F((t))  \quad \text{and} \quad L=F((t_1))((t_2)),
\]
where $t, t_1, t_2$ are independent indeterminates over $F$, and the quaternion algebra  $Q=(t_1, t_2)$ over $L$. For an $n$-dimensional quadratic form $q$, following ~\cite{Bec08}, we denote by $\Ad_q$ the split algebra with orthogonal involution $(M_n(F), \ad_q)$ where $\ad_q$ is the involution on $M_n(F)$ that is adjoint to the polar form of $q$. Let $S$ be a central simple algebra over $F$  and let $\rho$ be an anisotropic involution on $S$. Let $\rho'$ be any  involution of orthogonal or symplectic type on $Q$ and let
\[
(S_1, \sigma)= (S,\rho)\otimes_F \Ad_{\langle\langle t\rangle\rangle}, \quad  (S_2,\tau)= (S,\rho)\otimes_F (Q,\rho').
\]
\begin{theorem}\label{thm2.0}
If $(S_1,\sigma)$ is totally decomposable with $(n+1)$ split factors then $(S,\rho)$ is totally decomposable  with $n$ split factors.
\end{theorem}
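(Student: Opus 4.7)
The plan is to exploit the $t$-adic valuation $v$ on $K=F((t))$, whose residue field is $F$. The algebra $(S,\rho)$, being defined over $F$, carries the trivial (inertial) gauge with residue $(S,\rho)$ itself, and $\Ad_{\langle\langle t\rangle\rangle}$ carries a natural $\frac{1}{2}\Z$-valued gauge adapted to the diagonal basis of $\langle 1,-t\rangle$ with weights $0$ and $1/2$. Tensoring produces a gauge $g$ on $(S_1,\sigma)$ with value group $\frac{1}{2}\Z$; its associated graded algebra with involution is $\gr(S,\rho)\otimes\gr(\Ad_{\langle\langle t\rangle\rangle})$, whose degree-zero part is Morita equivalent to $(S,\rho)$ (the degree-zero part of $\gr(\Ad_{\langle\langle t\rangle\rangle})$ being the \'etale $F$-algebra $F\times F$ with trivial involution).

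Now assume $(S_1,\sigma)\simeq\bigotimes_{i=0}^{m}(Q_i,\sigma_i)$ with $n+1$ split factors. I would invoke the armature and gauge theory of Tignol--Wadsworth, to be recalled in Section~\ref{gauge}, in order to refine $g$ compatibly as a tensor product $g_0\otimes\cdots\otimes g_m$, where each $g_i$ is a gauge on $(Q_i,\sigma_i)$. The total ramification $\Gamma_g/\Gamma_K=\Z/2$ must be accounted for by the $g_i$, and since $[S_1]=[S]\in\Br(K)$ is unramified at $v$, any ramified factor $Q_i$ is forced to be Brauer-trivial. After possibly replacing the decomposition by an equivalent one using standard armature moves, the configuration can be arranged so that exactly one factor $(Q_{i_0},\sigma_{i_0})$ is ramified; this $Q_{i_0}$ is then necessarily split, with residue involution the trivial involution on $F\times F$. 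In particular $(Q_{i_0},\sigma_{i_0})$ is one of the $n+1$ given split factors.

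Finally, every remaining factor $(Q_i,\sigma_i)$ for $i\neq i_0$ is inertial, so its residue is a quaternion $F$-algebra with involution, split precisely when $Q_i$ is split. Tensoring these residues and then projecting onto one of the two primitive idempotents of the central \'etale algebra $F\times F$ recovers $(S,\rho)$ as a tensor product of $m$ quaternion $F$-algebras with involution, exactly $n$ of which are split; this is the desired decomposition. The main obstacle is the reduction to \emph{exactly one} ramified factor: a priori, several of the $Q_i$ could be ramified with Brauer classes cancelling to zero, and one must manipulate the decomposition via the armature-theoretic tools of Section~\ref{gauge} to reach the preferred configuration. Once this is arranged, the passage to residues is routine and yields the conclusion.
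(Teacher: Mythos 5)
Your overall frame---use the $\sigma$-special gauge on $S_1$, whose degree-zero residue is $(S\times S,\rho\times\rho)$, and read off a decomposition of $(S,\rho)$ from the residues of the factors---is the same as the paper's, but your argument has a genuine gap exactly where you locate ``the main obstacle''. The claim that unramifiedness of $[S_1]\in\Br(K)$ forces every ramified factor $Q_i$ to be Brauer-trivial is false as stated: only the sum of the residues vanishes, so several ramified division factors such as $(a_1t,b_1)\otimes(a_2t,b_2)$ may occur with cancelling ramification (you concede this yourself at the end). The reduction to a decomposition with exactly one ramified factor is then deferred to unspecified ``standard armature moves'', but that reduction is the heart of the theorem and is precisely what the paper proves: Lemma~\ref{lm3.2} performs the explicit exchange of quaternion generators (e.g.\ $t^{-1}i_1\otimes i_2$, $j_1\otimes 1$) replacing a product of two factors not defined over $F$ by a product in which one factor is defined over $F$, and Prop.~\ref{prop3.1} shows, via the uniqueness of the special gauge (Prop.~\ref{prop01}) and a comparison of residues (a simple residue $A'\otimes F(\sqrt{b})$ versus the non-simple residue $S\times S$), that the single remaining non-$F$-defined factor cannot be a division algebra, hence equals $\Ad_{\langle\langle a't\rangle\rangle}$ by Lemma~\ref{lm3.1}. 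None of this is routine, and your sketch supplies no substitute for it.

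There is a second, related gap: even granting the reduction to one ramified factor, you must control the number of split factors through the rearrangement. After the moves the factors are no longer ``the given'' ones, so the assertion that $(Q_{i_0},\sigma_{i_0})$ ``is one of the $n+1$ given split factors'' does not make sense; what is needed is that the new decomposition still contains $n+1$ split factors, $n$ of which are inertial. The paper addresses exactly this with the counting argument in the proof of Prop.~\ref{prop3.1} (assuming WLOG that $(S_1,\sigma)$ admits no decomposition with more than $n+1$ split factors, and using the two refinements in the statement of Lemma~\ref{lm3.2} to see that the split/non-split pattern can be preserved at each exchange). Your final step---passing to the degree-zero component, projecting onto one simple factor of $S\times S$, and concluding by dimension count---does coincide with the paper's conclusion of the proof of Theorem~\ref{thm2.0}, but without the two ingredients above the proof is incomplete.
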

\begin{theorem}\label{thm2.1}
If $(S_2,\tau)$ is totally decomposable with $n$ split factors then $(S,\rho)$ is totally decomposable  with $n$ split factors.
\end{theorem}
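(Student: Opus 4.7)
The plan is to exploit the Tignol--Wadsworth theory of gauges to reduce the statement to an armature cancellation in the associated graded algebra, where $\gr(Q)$ appears as a totally ramified graded quaternion subalgebra that must be absorbed into a single factor of any total decomposition. Equip $L=F((t_1))((t_2))$ with its Henselian valuation $v$ of value group $\Z^2$ and residue field $F$; since $Q=(t_1,t_2)$ is totally ramified over $L$, it is a division algebra and $v$ extends uniquely to an invariant valuation $v_Q$ on $Q$. The graded algebra $\gr(Q)$ is then a totally ramified graded quaternion division algebra over $\gr(L)=F[t_1^{\pm 1},t_2^{\pm 1}]$, with canonical armature $\{1,\bar i,\bar j,\bar{ij}\}$ mapping bijectively onto the relative value group $(\Z/2)^2$. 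The involution $\rho'$ can be chosen $v_Q$-invariant (automatic for the canonical symplectic involution, easily arranged in the orthogonal case), and tensoring with the trivial gauge on $S$ yields a $\tau$-invariant gauge $g$ on $S_2=S\otimes_F Q$.

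Now suppose $(S_2,\tau)\cong\bigotimes_{i=1}^{m+1}(Q_i,\sigma_i)$ with $n$ factors split. By the Tignol--Wadsworth structure theorem for gauges on totally decomposable algebras with involution, $g$ agrees (up to conjugation by a symmetric homogeneous unit) with a tensor product of natural gauges on the $(Q_i,\sigma_i)$; on each split factor one picks the split gauge so that the corresponding graded factor remains split. Passing to graded algebras yields a graded total decomposition
\[
(\gr(S_2),\gr(\tau))\cong\bigotimes_{i=1}^{m+1}(\gr(Q_i),\gr(\sigma_i))
\]
with $n$ split graded factors, while on the other hand $\gr(S_2)=S\otimes_F\gr(Q)$ with $S$ concentrated in degree zero.

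The crux is an armature cancellation step: the canonical armature of $\gr(Q)$, being totally ramified of full rank in $(\Z/2)^2$ and $\gr(\rho')$-stable, must be absorbed (after conjugation by a homogeneous $\tau$-symmetric unit) into the armature of a single factor $(\gr(Q_{i_0}),\gr(\sigma_{i_0}))$, which forces $i_0$ to be a non-split index. The centralizer of this factor in $\gr(S_2)$ is then $\gr$-isomorphic to $S$ with trivial grading, and the remaining $m$ factors of the graded decomposition restrict there to a total decomposition with $n$ split factors; since everything is concentrated in degree zero, this lifts verbatim to the desired total decomposition of $(S,\rho)$ with $n$ split factors.

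The principal obstacle is the armature cancellation step: one must justify that the rigid totally ramified canonical armature of $\gr(Q)$ can always be absorbed into exactly one non-split factor of the given decomposition without disturbing the split status of the remaining $n$ factors. This should follow from the interaction of totally ramified sub-armatures with involution-invariant decompositions in the Tignol--Wadsworth framework, together with a value-group argument forcing any family of elements of maximal relative value to lie in a single tensor factor.
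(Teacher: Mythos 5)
Your overall strategy (the special gauge on $S_2$, passage to the graded algebra, armatures) is in the same spirit as the paper, but two of your key steps are genuine gaps, and they are exactly where the difficulty lies. First, the ``structure theorem'' you invoke does not exist in the form you need: since $\tau$ is anisotropic and the valuation on $L$ is Henselian, the $\tau$-invariant special gauge is \emph{unique} (Tignol--Wadsworth), so you have no freedom to ``pick the split gauge'' on each split factor; and even if the special gauge did restrict to a tensor product of gauges on the given factors, a split factor such as $(M_2(L),\ad_{\langle\langle t_1\rangle\rangle})$ has degree-zero residue $F\times F$ (its graded algebra is an endomorphism algebra of a graded vector space with shifted grading), so ``split over $L$'' does not pass to a split quaternion factor of the residue. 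Tracking splitness through the residue is the actual crux, and it requires first normalizing each split factor so that it is defined over $F$; in the paper this is done by a Hermitian-form argument with Hensel's lemma (Lemma \ref{lm2.3}), which replaces $(M_2(L),\tau_k)\otimes(Q,\theta_2)$ by $(M_2(F),\ad_{\langle\langle\lambda_k\rangle\rangle})\otimes(Q,\theta_2)$ with $\lambda_k\in F^{\times}$.

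Second, your ``armature cancellation'' step -- which you yourself flag as the principal obstacle -- is not only unproven but, as stated, false in general: the ramification of $\gr(Q)$ need not be concentrated in a single factor of a given decomposition of $(S_2,\tau)$ (the two relative value-group generators can come from two different factors, e.g.\ factors of the shape $(at_1,b)$ and $(ct_2,d)$; compare the paper's remark that decompositions of $(S_2,\tau)$ need not arise from decompositions of $(S,\rho)$), and armatures are blind to splitness, so no value-group argument can by itself preserve ``the split status of the remaining $n$ factors.'' The correct and provable statement is weaker: the degree-zero subgroup $\cC_0=\ker\overline{g}_\cC$ of the armature gives an armature of the residue, whence only total decomposability of the residue involution (Prop.~\ref{prop2.1}, via \cite[Prop.~4.8]{TW10}). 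The paper therefore splits the argument: isolate the $n$ split factors and their centralizer $B$, show $B\simeq B_0\otimes Q$ with $B_0$ defined over $F$ by a Brauer-class argument, split off $(Q,\Int(u)\circ\gamma)$ using the residue of an invariant gauge (Lemma \ref{lm2.2}), obtain total decomposability of $(B_0,\eta_0)$ from the armature argument, normalize the split factors by Lemma \ref{lm2.3}, and only then compare with the residue $(S,\rho)$ of the unique special gauge $g_2$. To make your proof work you would need to supply substitutes for both of these steps; as written, the passage from ``$n$ split factors over $L$'' to ``$n$ split factors in degree zero'' is unsupported.
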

Notice that if $(S,\rho)$ is totally decomposable then the algebras with involution $(S_1,\sigma)$ and $(S_2,\tau)$ are totally decomposable. But we have:
\begin{remark}
Decompositions of the algebra $(S_1,\sigma)$ of Thm.~\ref{thm2.0} do not necessarily arise from a decomposition of $(S,\rho)$ by multiplication with the factor $\Ad_{\langle\langle t\rangle\rangle}$. For instance, the arguments in the proof of Lemma~\ref{lm3.2} show that if $(S,\rho)$ is a quaternion algebra $(a,b)$ with its canonical involution, then
\[
(S,\rho)\otimes\Ad_{\langle\langle t\rangle\rangle} \simeq ((at,b),\nu)\otimes((t,b),\gamma)
\]
where $\gamma$ is the canonical involution and $\nu$ an orthogonal involution with discriminant $at$. Likewise, decompositions of the algebra $(S_2,\tau)$ of Thm.~\ref{thm2.1} do not necessarily arise from a decomposition of $(S,\rho)$.
\end{remark}
\section{Degree 8}\label{deg8}
Let $A$ be a central simple algebra  of  degree $8$ and co-index $2$ and let $\sigma$ be an anisotropic involution on $A$. If $\sigma$ is orthogonal, it follows from ~\cite[Thm. 1.1]{MQT09} that $(A,\sigma)\simeq (D, \theta)\otimes (M_2(F), \text{ad}_\varphi)$ where $(D,\theta)$ is some biquaternion algebra with orthogonal involution and ad$_\varphi$ is the adjoint involution with respect to some binary quadratic form $\varphi$. Notice that $(D,\theta)$ need not to be decomposable in general. Indeed, the following result is due to Sivatski.
\begin{proposition}[{\cite[Prop. 5]{Siv05}}]\label{sivatski}
There exists a totally decomposable $F$-algebra with orthogonal involution $(A',\sigma')$ of degree $8$ and co-index $2$ such that $(A',\sigma')$ is not totally decomposable with $1$ split factor.
\end{proposition}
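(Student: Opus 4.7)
The plan is to construct an explicit counterexample. Start with a base field $F$ carrying a biquaternion division $F$-algebra $D$ equipped with an \emph{indecomposable} orthogonal involution $\theta$; such pairs $(D,\theta)$ exist over suitable function fields and iterated Laurent series fields, and building such data by valuation-theoretic means is precisely the approach developed in later sections of this paper. Set $A' = M_2(D)$ and equip it with $\sigma' = \theta \otimes \ad_\varphi$ for a carefully chosen anisotropic binary quadratic form $\varphi$ over $F$; then $(A',\sigma')$ has degree $8$, co-index $2$, and $\sigma'$ is orthogonal and anisotropic, so the framework of \cite{MQT09} gives exactly a decomposition of the form $(D,\theta) \otimes \Ad_\varphi$.

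The first subtask is to check that $(A',\sigma')$ is nevertheless totally decomposable, in spite of the fact that $(D,\theta)$ is not. This is done by exhibiting three mutually centralizing quaternion $F$-subalgebras $Q_1, Q_2, Q_3$ of $A'$ whose product is all of $A'$ and on each of which $\sigma'$ restricts to an involution of the appropriate type and discriminant (with an even number of them symplectic, so that the total type is orthogonal). The presence of the extra $\Ad_\varphi$ factor allows one to ``reshuffle'' the obstruction to decomposing $(D,\theta)$ so that three such quaternion subalgebras with involution become available; this step should reduce to a direct structural calculation once $D$, $\theta$, and $\varphi$ are chosen compatibly.

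The heart of the argument, and the main obstacle, is to rule out any decomposition of the form
\[
(A',\sigma') \simeq \Ad_\psi \otimes (R_1, \mu_1) \otimes (R_2, \mu_2)
\]
with $\psi$ a binary quadratic form and $(R_i,\mu_i)$ quaternion algebras with involution. Such a hypothetical expression forces $R_1 \otimes_F R_2 \sim D$, so $(R_1,\mu_1)\otimes(R_2,\mu_2)$ would realize the biquaternion $D$ with a \emph{decomposable} orthogonal involution. The plan is then to compare the Brauer class, the discriminant $\disc(\sigma')$, and the Clifford algebra $C(A',\sigma')$ coming from this hypothetical split decomposition against those computed from $(D,\theta) \otimes \Ad_\varphi$. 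With $F, D, \theta$, and $\varphi$ chosen as in Step~1, these invariants should be rigid enough to force $(D,\theta)$ itself to be decomposable, contradicting the construction. The delicate point is arranging $F$, $D$, $\theta$, $\varphi$ so that the classical invariants genuinely suffice — typically through an explicit Clifford-algebra computation combined with a residue/specialization argument at a well-chosen valuation of $F$ where the indecomposability of $(D,\theta)$ leaves a detectable trace that cannot be accounted for by any split decomposition.
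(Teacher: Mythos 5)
The first thing to note is that the paper does not prove this statement at all: it is quoted verbatim from Sivatski and justified by the citation \cite[Prop.~5]{Siv05}, so your attempt has to be measured against Sivatski's construction rather than anything in the present text. As written, your text is a strategy outline rather than a proof: no explicit $F$, $D$, $\theta$, $\varphi$ is produced, the total decomposability of $(A',\sigma')=(D,\theta)\otimes\Ad_\varphi$ is only asserted to ``reduce to a direct structural calculation'', and the exclusion of a split factor is only asserted to follow from invariants that ``should be rigid enough''. These are precisely the two nontrivial points, and both are left open.

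Moreover, the mechanism you propose for the crucial second step cannot work as stated. First, a hypothetical decomposition $(A',\sigma')\simeq\Ad_\psi\otimes(R_1,\mu_1)\otimes(R_2,\mu_2)$ only yields \emph{some} decomposable involution $\mu_1\otimes\mu_2$ on $R_1\otimes R_2\simeq D$; it need not be conjugate to $\theta$, and since every biquaternion algebra carries decomposable orthogonal involutions (those of trivial discriminant), no contradiction of the form ``$(D,\theta)$ itself is decomposable'' can be extracted this way. Second, the classical invariants you plan to compare are incapable of separating the two properties: for \emph{any} totally decomposable orthogonal involution of degree $8$, the discriminant is trivial and one component of the Clifford algebra is split, whence by the fundamental relations the other component is Brauer-equivalent to $A'$, i.e.\ of index $4$ here. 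Thus $\disc(\sigma')$ and $C(A',\sigma')$ take the same forced values whether or not a decomposition with a split factor exists, so no ``comparison of Brauer class, discriminant and Clifford algebra'' can rule out the split decomposition; a genuinely finer tool is needed (Sivatski uses a more delicate quadratic-form/Clifford-algebra argument, and the present paper's generalizations rely on valuations and gauges for the same reason). Finally, the positive half of your plan (exhibiting three $\sigma'$-stable, mutually centralizing quaternion subalgebras even though $(D,\theta)$ is indecomposable) is exactly the delicate existence statement the example is built around, and ``reshuffling the obstruction'' is not an argument; one needs either an explicit computation for a concrete choice of $(D,\theta,\varphi)$ or an appeal to the degree-$8$ decomposability criterion, neither of which your proposal carries out.
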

Now assume that $\sigma$ is symplectic. It is shown in ~\cite{BMT03} that  $(A,\sigma)\simeq (Q, \gamma)\otimes_F (A_0, \sigma_0)$ where $(Q, \gamma)$ is a quaternion subalgebra of $A$ with  the canonical involution and $\sigma_0$ is an orthogonal involution on the subalgebra  $A_0$ of $A$. Contrary to the orthogonal case, we show in the following proposition that there  exists no analogue of the above result of Sivatski when $\sigma$ is symplectic. First, let us recall that a relative invariant for symplectic involutions on central simple algebras of degree divisible by $4$ is defined in \cite{BMT03} and the authors show that the relative invariant is trivial for an algebra of degree $8$ and co-index $2$ with symplectic involution $(A,\sigma)$ if and only if $(A,\sigma)$ is totally decomposable with $1$ split factor (\cite[Thm. 8]{BMT03}).  In ~\cite{GPT09}, this invariant leads to an absolute  invariant $\Delta$, with value in $H^3(F, \mu_2)$, for symplectic involutions. For a central simple algebra $A$ of degree $8$ with a symplectic involution $\sigma$, the element $\Delta(A,\sigma)$ is zero if and only if $(A,\sigma)$ is totally decomposable (see ~\cite[Thm. B]{GPT09}).
\begin{proposition}\label{degree8}
Let $(A,\sigma)$ be a totally decomposable algebra of degree $8$ with an anisotropic symplectic involution, of co-index $2^m$ (where $m\le 2$). Then $(A,\sigma)$ is totally decomposable with $m$ split factors.
\end{proposition}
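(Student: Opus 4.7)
The plan is to treat the three cases $m=0,1,2$ separately.

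The case $m=0$ is vacuous: $(A,\sigma)$ is totally decomposable by hypothesis, which is the same as being totally decomposable with $0$ split factors.

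For $m=2$ (so $A$ has index $2$), I would invoke the symplectic index-$2$ decomposition of Becher recalled in the Introduction, which gives $(A,\sigma)\simeq (H',\gamma)\otimes \Ad_{\pi''}$ for some quaternion algebra $H'$ with its canonical involution and some $2$-fold Pfister form $\pi''$. Writing $\pi''=\langle\langle a\rangle\rangle\otimes\langle\langle b\rangle\rangle$ yields $\Ad_{\pi''}\simeq \Ad_{\langle\langle a\rangle\rangle}\otimes \Ad_{\langle\langle b\rangle\rangle}$, so that $(A,\sigma)\simeq (H',\gamma)\otimes \Ad_{\langle\langle a\rangle\rangle}\otimes \Ad_{\langle\langle b\rangle\rangle}$, a decomposition into three quaternion algebras with involution, two of them split.

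The substantial case is $m=1$, where $A$ has index $4$. I would first apply the BMT decomposition \cite{BMT03} to write $(A,\sigma)\simeq (Q,\gamma)\otimes (A_0,\sigma_0)$, with $Q\subset A$ a $\sigma$-stable quaternion subalgebra on which $\sigma$ restricts to the canonical involution, and $A_0$ of degree $4$ carrying an orthogonal involution $\sigma_0$. Anisotropy of $\sigma$ forces $Q$ to be a division algebra, since otherwise $(Q,\gamma)$ would be hyperbolic and $\sigma$ hyperbolic as well. By hypothesis $(A,\sigma)$ is totally decomposable, so Thm.~B of \cite{GPT09} gives $\Delta(A,\sigma)=0$ in $H^3(F,\mu_2)$. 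On the other hand, Thm.~8 of \cite{BMT03} characterizes decomposability with $1$ split factor as the vanishing of the BMT relative invariant $\delta(A,\sigma,\widetilde{Q})$ for some quaternion subalgebra $\widetilde{Q}$ with $\sigma|_{\widetilde{Q}}=\gamma$. Since the absolute invariant $\Delta$ of \cite{GPT09} is constructed from $\delta$ by quotienting out its dependence on the auxiliary quaternion subalgebra, the hypothesis $\Delta(A,\sigma)=0$ says precisely that $\delta(A,\sigma,Q)$ lies in this indeterminacy subgroup. The remaining task is to exploit a total decomposition of $(A,\sigma)$ to produce a new quaternion subalgebra $\widetilde{Q}$ --- appearing as a symplectic factor in some full decomposition of $(A,\sigma)$ --- for which $\delta(A,\sigma,\widetilde{Q})$ vanishes on the nose, so that BMT's criterion applies directly.

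I expect the main obstacle to be this last step: translating the coset information encoded in $\Delta(A,\sigma)=0$ into a genuine zero of $\delta$ for a well-chosen $\widetilde{Q}$. This will require a careful accounting of how $\delta(A,\sigma,\widetilde{Q})$ shifts as $\widetilde{Q}$ varies over $\sigma$-stable quaternion subalgebras on which $\sigma$ is canonical, combined with the flexibility coming from the fact that the biquaternion Brauer class $[A]$ admits several presentations $[A]=[Q_1']+[Q_2']$, so that one can aim for $[\widetilde{Q}]\in\{[Q_1'],[Q_2']\}$ and thereby force the index of the centralizer of $\widetilde{Q}$ to drop to $2$.
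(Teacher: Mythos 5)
Your cases $m=0$ and $m=2$ are fine and coincide with the paper's treatment (the co-index $4$ case is exactly the symplectic part of Becher's result in \cite{Bec08}, and splitting the $2$-fold Pfister form into two binary factors is harmless). The problem is the case $m=1$, which is the entire content of the proposition, and there your argument stops precisely where the proof has to happen: you reduce everything to ``produce a quaternion subalgebra $\widetilde Q$ for which the relative invariant vanishes on the nose,'' declare this the main obstacle, and do not carry it out. As written, the co-index $2$ case is therefore not proved; this is a genuine gap, not a deferred routine verification.

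Moreover, the picture you use to frame that missing step does not match the cited results, and this is why you see an obstacle where the paper sees none. The absolute invariant $\Delta(A,\sigma)$ of \cite{GPT09} is an honest, well-defined class in $H^3(F,\mu_2)$, not a coset of a relative invariant modulo an ``indeterminacy subgroup,'' so $\Delta(A,\sigma)=0$ is not mere coset information that must be upgraded by varying $\widetilde Q$ or by playing with presentations $[A]=[Q_1']+[Q_2']$ of the Brauer class. The paper's proof of the case $m=1$ is exactly the two-step citation you almost assemble: total decomposability gives $\Delta(A,\sigma)=0$ by Thm.~B of \cite{GPT09}, and for a degree $8$, co-index $2$ symplectic $(A,\sigma)$ the vanishing of this invariant yields the triviality of the relative invariant to which Thm.~8 of \cite{BMT03} applies, whose conclusion is precisely total decomposability with one split factor. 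So either you invoke those statements in their exact form (in which case nothing remains to be done and your anticipated analysis of how $\delta$ shifts with $\widetilde Q$ is unnecessary), or you must actually supply the argument you postpone --- but your proposal does neither. Incidentally, the decomposition $(A,\sigma)\simeq(Q,\gamma)\otimes(A_0,\sigma_0)$ from \cite{BMT03} with which you open the case $m=1$ plays no role in the paper's proof and is not needed.
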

\begin{proof}
If the co-index of $A$ is $4$, it follows from Becher's result ~\cite[Cor]{Bec08} that $(A,\sigma)$ is totally decomposable with $2$ split factors.  Assume that the co-index of $A$ is $2$. Since $(A,\sigma)$ is totally decomposable, the absolute invariant $\Delta(A,\sigma)$, defined in ~\cite{GPT09}, is zero. Therefore it readily follows by  ~\cite[Thm. 8]{BMT03} that $(A,\sigma)$ is totally decomposable with $1$ split factors.
\end{proof}
\section{Gauges on algebras with involution}\label{gauge}
Gauges on algebras with  involution play a major role in the next sections. In this section we recall the notions of value functions and gauges introduced in \cite{RTW07}, \cite{TW10},   \cite{TW11} and  \cite{TW14} and gather some results for the sequel.

We fix a divisible totally ordered abelian group $\Gamma$, which will contain the value of all the valuations and the degree of all the gradings we consider. Thus, a valued division algebra is a pair $(D,v)$ where $D$ is a division algebra and $v\,:\, D \longrightarrow \Gamma\cup\{\infty\}$ is a valuation. The group $v(D\m)$ of values of $D$ is denoted by $\Gamma_D$, and the residue division algebra by $\overline D$. The valuation $v$ defines a filtration on $D$: for $\gamma\in \Gamma$, set
\[
D_{\geq \gamma}=\{d\in D\,|\, v(d)\geq \gamma\},\quad D_{> \gamma}=\{d\in D\,|\, v(d)> \gamma\}  
\]
and
\[
D_\gamma= D_{\geq \gamma}/D_{> \gamma}.
\]
The associated graded ring (of $v$ on $D$) is
\[
\gr_v(D)=\bigoplus_{\gamma\in \Gamma} D_\gamma
\]
with the multiplication induced by the multiplication in $D$. The homogeneous elements of $\gr_v(D)$ are those in $\bigcup_{\gamma\in \Gamma} D_\gamma$. For $d\in D\m$, we write $\widetilde d$ for the image $d+D_{>v(d)}$ for $d\in D_{v(d)}$, and $\widetilde{0}_D=0$ in $\gr_v(D)$. It follows from the fact $v(cd)=v(c)+v(d)$, for $c,d\in D\m$, that $\widetilde{cd}=\widetilde c \widetilde d$. So, in particular, we have $\widetilde d \,\widetilde{(d\inv)}=\widetilde 1$ for any $d\in D\m$. This shows that $\gr_v(D)$ is graded division ring. The grade group of $\gr_v(D)$, denoted $\Gamma_{\gr_v(D)}$, is $\{\gamma\in \Gamma\, |\, D_\gamma\ne 0\}$. Notice that $\Gamma_{\gr_v(D)}=\Gamma_D$ and the degree $0$ component of $\gr_v(D)$ is $D_0=D_{\geq 0}/D_{>0}= \overline D$.

Let $(F,v)$ be a valued field, and let $M$ be a (right) $F$-vector space. A $v$-\emph{value function} on $M$ is a map $\alpha\,:\, M\longrightarrow \Gamma\cup\{\infty\}$ such that 
\begin{itemize}
\item[(i)] $\alpha(x)=\infty$ if and only if $x=0$;
\item[(ii)] $\alpha(x+y)\geq \min(\alpha(x),\alpha(y))$ for $x,y\in M$;
\item[(iii)] $\alpha(xc)=\alpha(x)+v(c)$ for all $x\in M$ and $c\in F$.
\end{itemize}
If $\alpha$ is a $v$-value function on $M$, in the same way as for the construction of the graded division algebra, we associate to $M$ a graded $\gr_v(F)$-module that we denote by $\gr_\alpha(M)$. In addition, if $M$ is finite-dimensional and $[M:F]=[\gr_\alpha(M):\gr_v(F)]$, we say that $\alpha$ is a \emph{norm} (or a $v$-norm). A $v$-value function $g$ on an $F$-algebra $A$ is \emph{surmultiplicative} if
\[
g(1)=0 \quad \text{and}\quad g(xy)\geq g(x)+g(y) \,\, \text{for all } x,y\in A.
\]
If $g$ is a surmultiplicative $v$-value function on an $F$-algebra $A$, then $\gr_g(A)$ is an algebra over $\gr_v(F)$, in which multiplication is defined by
\[
\widetilde a\widetilde b= ab+A_{>g(a)+g(b)}=
\left\{ \begin{array}{ll}
\widetilde{ab} & \text{if $g(ab)=g(a)+g(b)$},\\
0 & \text{if $g(ab)> g(a)+g(b)$},
  \end{array} \right.
\quad \text{for $a,b\in A$}.
\] 
Suppose $A$ is a finite-dimensional simple $F$-algebra. A surmultiplicative $v$-value function $g$ on $A$ is called an $F$-\emph{gauge} (or a $v$-gauge) if:
\begin{itemize}
\item[(i)] $g$ is a $v$-norm, i.e, $[A:F]=[\gr_g(A):\gr_v(F)]$;
\item[(ii)] $\gr_g(A)$ is a graded semisimple $\gr_v(F)$-algebra, i.e, $\gr_g(A)$ does not contain any non-zero nilpotent homogeneous two-sided ideal.
\end{itemize}
We give the following examples that will be used in the proof of our main results.
\begin{examples}\label{example}
The algebras $S_1$ and $S_2$ are as in Theorems \ref{thm2.0} and \ref{thm2.1}.

(1) Let
\[
i= \begin{pmatrix}
1 & 0\\
0 &-1
\end{pmatrix} , \quad 
j=\begin{pmatrix}
0 & t\\
1 & 0
\end{pmatrix}\in M_2(K),
\]
so $\ad_{\langle\langle t\rangle\rangle}(i)=i$ and $\ad_{\langle\langle t\rangle\rangle}(j)=-j$. Let $(s_\ell)_{1\le \ell \le n}$ be an $F$-basis of $S$, so $(s_\ell \otimes 1,  s_\ell \otimes i,  s_\ell \otimes j,  s_\ell \otimes ij )_{1\le \ell \le n}$ is a $K$-basis of $S_1$. The $t$-adic valuation $v$ on $K$ extends to a map
\[
g_1\,:\, S_1\longrightarrow\, (\tfrac{1}{2} \Z)\cup\{\infty\}
\]
defined by
\[
g_1\big(\sum_{1\le \ell \le n} s_\ell \otimes(\alpha_\ell+ \beta_\ell i+ \gamma_\ell\ j+ \delta_\ell ij) \big)=\min_{1\le \ell \le n}\big(v(\alpha_\ell), v(\beta_\ell), v(\gamma_\ell)+\tfrac{1}{2}, v(\delta_\ell)+\tfrac{1}{2}\big).
\]
We readily check that $g_1(a_1a_2)\geq g_1(a_1)+g_1(a_2)$ for $a_1, a_2\in S_1$.  It then follows by ~\cite[Lemma 3.23]{TW14} that $g_1$ defines a filtration on $S_1$.  The associated graded ring is noted $\gr_{g_1}(S_1)$. Let $\widetilde t$ be the image of $t$ in $\gr_v(K)$. We have
\[
\gr_{g_1}(S_1) =  S\otimes_F M_2(F[\widetilde t, {\widetilde t\,}^{-1}])
\]
with the grading defined by
\[
\begin{array}{ll}
\gr_{g_1}(S_1)_\gamma =
\begin{pmatrix}
S\widetilde{t\,\,}^\gamma & 0\\
0 & S \widetilde{t\,\,}^\gamma
\end{pmatrix} &\qquad  \text{ for } \gamma\in\Z\\
\gr_{g_1}(S_1)_\gamma = 
\begin{pmatrix}
0 & S\widetilde{t\,\,}^{\gamma+\tfrac{1}{2}}\\
S\widetilde{ t\,\,}^{\gamma-\tfrac{1}{2}} & 0
\end{pmatrix} & \qquad \text{ for } \gamma\in\tfrac{1}{2}\Z\backslash\Z.
  \end{array} 
\]
Let $x$ be a nonzero homogeneous element in $\gr_{g_1}(S_1)$. Since  $\widetilde t$ is invertible in $\gr_{g_1}(S_1)$, the two-sided ideal $I(x)\subset \gr_{g_1}(S_1)$ generated by $x$ contains a nonzero element of $M_2(S)$. But $M_2(S)$ is simple and contained in $\gr_{g_1}(S_1)$, hence $I(x)$ contains $1$. Therefore, $\gr_{g_1}(S_1)$ is graded  simple algebra, and  $g_1$ is a gauge on $S_1$ (see  ~\cite[Def. 3.31]{TW14}).

(2) The $(t_1, t_2)$-adic valuation $v$  on $L$ being Henselian, it extends to a valuation $w$ on $Q$ defined by
\[
w(q)= \tfrac{1}{2}v\big(\Nrd(q)\big)\in \big(\tfrac{1}{2}\Z\big)^2\cup \{\infty\} \qquad \text{for } q\in Q,
\]
where $\Nrd$ is the reduced norm. One further extends $w$ to a map
\[
g_2\,:\, S_2\, \longrightarrow\,  \big(\tfrac{1}{2}\Z\big)^2\cup \{\infty\}
\] 
as follows: we consider the $F$-base $(s_\ell)_{1\le \ell \le n}$ of $S$ as above. Every element in $S_2$ has a unique representation of the form $\sum_\ell s_\ell\otimes q_\ell$ for some $q_\ell\in Q=(t_1, t_2)$. We define
\[
g_2(\sum_\ell s_\ell\otimes q_\ell)= \min\{ w(q_\ell)\,\, |\,\, 1\le \ell\le n  \}.
\]
It is easy to see that this definition does not depend on the choice of the base $(s_\ell)_{1\le \ell\le n}$. We easily check that $g_2$ is a gauge. The associated graded ring is 
\[
\gr_{g_2}(S_2)=S\otimes_F (\widetilde t_1, \widetilde t_2)_{\gr_v(L)}
\]
where $\widetilde t_1$ and $\widetilde t_2$ are respectively the images of $t_1$ and $t_2$ in  $\gr_v(L)=F[\widetilde t_1, \widetilde {t_1\,}^{-1}, \widetilde t_2, \widetilde {t_2\,}^{-1}]$.
\end{examples}

Now, let $\theta$ be an $F$-linear involution on a central simple algebra $A$ over $F$. Assume that $v$ is a valuation on $F$ and $A$ carries a $v$-gauge $g$. The gauge $g$ is said to be \emph{invariant} under $\theta$ (or $\theta$-invariant) if
\[
g(\theta(x))=g(x) \quad \text{for all $x\in A$}.
\]
In this case, the involution $\theta$ preserves the filtration on $A$ defined by $g$. Thus $\theta$ induces an involution $\widetilde \theta$ on $\gr_g(A)$ such that $\widetilde\theta(\widetilde x)=\widetilde{\theta(x)}$ for all $x\in A$. The involution $\theta$ is said to be anisotropic if there is no nonzero elements $x\in A$ such that $\theta(x)x=0$ (see ~\cite[\S 6.A]{KMRT98}). Likewise, $\widetilde\theta$ is said to be anisotropic if there is no nonzero homogeneous element $\varepsilon\in \gr_g(A)$ such that $\widetilde\theta(\varepsilon)\varepsilon=0$. Clearly, if $\widetilde\theta$ is anisotropic then $\theta$ is anisotropic. If $g$ is a $v$-gauge on $A$ that is invariant under $\theta$, we denote by $\theta_0$ the $0$ component of $\widetilde\theta$. Thus, $\theta_0$ is an involution on the $\overline F$-algebra $A_0=A_{\geq 0}/A_{>0}$, which may be viewed as the residue involution on the residue algebra $A_0$. 

Let $(F,v)$ be a Henselian valued field with $\operatorname{char}(\overline F)\ne 2$ and let $(A,\theta)$ be a central simple $F$-algebra with ($F$-linear) involution.  If the involution $\theta$ is anisotropic, it is shown in ~\cite[Thm. 2.2]{TW11} that there exists a unique $v$-gauge $g$ on $A$ such that $g$ is invariant under $\theta$ and $g(\theta(a)a)=2g(a)$ for all $a\in A$. Following Tignol-Wadsworth's terminology (\cite{TW11}) such a gauge $g$ is called $\theta$-\emph{special}. 

The following proposition gives an explicit description of the special gauges on $(S_1,\sigma)$ and $(S_2,\tau)$. 
\begin{proposition}\label{prop01}
We consider the gauges $g_1$ and $g_2$ constructed in Examples \ref{example} and the algebras with involution $(S_1, \sigma)$ and $(S_2,\tau)$ are  as in Theorems \ref{thm2.0} and \ref{thm2.1}.
\begin{itemize}
\item[(1)] The gauge $g_1$ is the unique gauge on $S_1$ that is invariant under $\sigma$, and moreover it satisfies $g_1(\sigma(a)a)=2g_1(a)$ for all $a\in S_1$.
\item[(2)] The gauge $g_2$ is the unique gauge on $S_2$ that is invariant under $\tau$, and moreover it satisfies $g_2(\tau(b)b)=2g_2(b)$ for all $b\in S_2$.
\end{itemize}
(That is, $g_1$ and $g_2$ are $\sigma$-special and $\tau$-special respectively.)
\end{proposition}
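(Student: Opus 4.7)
The plan is to apply the uniqueness part of \cite[Thm.~2.2]{TW11}: since $K$ and $L$ are Henselian, any anisotropic involution $\theta$ on a central simple algebra over them admits a unique $\theta$-special gauge. It therefore suffices to verify, for each of the two pairs, that the gauge is $\theta$-invariant, that the specialness equation $g(\theta(a)a)=2g(a)$ holds, and that $\theta$ is anisotropic. The last two conditions follow at once from showing that the induced graded involutions $\widetilde\sigma$ on $\gr_{g_1}(S_1)$ and $\widetilde\tau$ on $\gr_{g_2}(S_2)$ are anisotropic, since surmultiplicativity always yields $g(\theta(a)a)\geq 2g(a)$ with equality if and only if $\widetilde\theta(\widetilde a)\,\widetilde a\neq 0$, and $\widetilde\theta$ anisotropic implies $\theta$ anisotropic.

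Invariance is immediate from the defining formulas. For $g_1$, the involution $\ad_{\langle\langle t\rangle\rangle}$ fixes $1, i, ij$ and sends $j\mapsto -j$, so it preserves the $v$-valuation of each of the four coefficients entering the min; and $\rho$, being $F$-linear, acts on $(s_\ell)$ by an $F$-matrix whose entries have $v$-value $0$. For $g_2$, invariance is immediate from $w=\tfrac{1}{2}v\circ\Nrd$ together with $\Nrd\circ\rho'=\Nrd$.

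The anisotropy of $\widetilde\sigma$ is the computational heart. I would work in the explicit presentation $\gr_{g_1}(S_1)=S\otimes_F M_2(F[\widetilde t,\widetilde t^{-1}])$ from Example~\ref{example}(1) and argue degree by degree. A homogeneous $X$ in integer degree $\gamma$ has the form $X=\widetilde t^\gamma\diag(u,v)$ with $u,v\in S$, and $\widetilde\sigma(X)X=\widetilde t^{2\gamma}\diag(\rho(u)u,\,\rho(v)v)$ vanishes only when $u=v=0$ by anisotropy of $\rho$. For half-integer $\gamma$, $X$ is off-diagonal with entries $u\widetilde t^{\gamma+1/2}$ and $v\widetilde t^{\gamma-1/2}$; computing $\widetilde{\ad}$ via $\ad_{\langle\langle t\rangle\rangle}(M)=\diag(1,-t)^{-1}M^{\mathrm{T}}\diag(1,-t)$ yields $\widetilde\sigma(X)X=-\widetilde t^{2\gamma}\diag(\rho(v)v,\,\rho(u)u)$, and the same conclusion follows.

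For $\widetilde\tau$ the argument is cleaner: since $Q$ is a division algebra over the Henselian field $L$, $\gr_w(Q)=(\widetilde t_1,\widetilde t_2)_{\gr_v(L)}$ is a graded division ring whose homogeneous $F$-components are $1$-dimensional. Hence every homogeneous element of $\gr_{g_2}(S_2)=S\otimes_F\gr_w(Q)$ takes the form $s\otimes q$ for a unique $s\in S$ and a homogeneous $q\in\gr_w(Q)$, and $\widetilde\tau(s\otimes q)(s\otimes q)=c\,\rho(s)s\otimes q^2$ for some $c\in F^\times$; since $q^2\neq 0$ in the graded division ring, this vanishes iff $s=0$. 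The main obstacle is the half-integer-degree computation for $\widetilde\sigma$, where one must carefully track the $\pm\widetilde t^{\pm 1}$ twisting of off-diagonal entries under $\widetilde{\ad}$ to see that $\widetilde\sigma(X)X$ reduces to diagonal entries of the form $\rho(u)u,\,\rho(v)v$; once this is done, \cite[Thm.~2.2]{TW11} yields the required uniqueness.
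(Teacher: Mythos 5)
Your proposal is correct and follows essentially the same route as the paper: invariance is read off from the defining formulas, anisotropy is transferred from $\rho$ to the associated graded side, and uniqueness together with the identity $g(\theta(a)a)=2g(a)$ comes from \cite[Thm.~2.2]{TW11} over the Henselian fields $K$ and $L$. The only difference is that where the paper checks anisotropy just for the degree-zero residue involutions $\rho\times\rho$ and $\rho$ and cites \cite[Cor.~2.3]{TW11}, you verify anisotropy of the full graded involutions $\widetilde\sigma$ and $\widetilde\tau$ by a direct computation on homogeneous components, a harmless and slightly more self-contained substitute.
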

\begin{proof}
(1)  It follows from the definition of  $g_1$ that it is invariant under $\sigma$, that is, $g_1(\sigma(a))=g_1(a)$ for all $a\in S_1$. On $\gr_{g_1}(S_1)_0=S\times S$, the induced involution is  $\rho\times \rho$. Therefore, since $\rho$ is anisotropic, $\sigma$ also is by ~\cite[Cor. 2.3]{TW11}, and the statement follows by   ~\cite[Thm. 2.2 and Cor. 2.3]{TW11}.

(2) One may  check  that $g_2(\tau(b))=g_2(b)$ for all $b\in S_2$ (that is, $g_2$ is invariant under $\tau$), and $g_2(\tau(b)b)=2g_2(b)$. We may therefore consider the residue algebra with involution $\gr_{g_2}(S_2, \tau)_0$. Since the residue of $Q$ is $F$ and $S$ is defined over $F$, we have $\gr_{g_2}(S_2, \tau)_0\simeq (S,\rho)$.  The involution $\rho$ being anisotropic, the assertion follows by the same arguments as above.
\end{proof}
\section{Decomposition with arbitrarily large co-index}\label{fixIndex}
The goal of this section is to give examples of a totally decomposable algebras with orthogonal involution of degree $2^m$ and co-index $2^{m-2}$ (for $m\geq 3$) which are not totally decomposable with $(m-2)$ split factors. We start out by proving Thm.~\ref{thm2.0}. We first prove the following:
\begin{lemma}\label{lm3.1}
Let $(H,\nu)$ be a quaternion algebra with anisotropic involution over $K=F((t))$ and let $\gamma$ denote the canonical involution on $H$. 
\begin{itemize}
\item[(1)] Suppose the algebra $H$ is not defined over $F$. Then $H\simeq (at,b)$ for some $a,b\in F^\times$.

  \item[(2)] Suppose the algebra with involution $(H,\nu)$ is not defined over $F$ and  $\nu$ is orthogonal and let $\disc(\nu)$ denote the discriminant of $\nu$. Then either $\disc(\nu)=at\cdot K^{\times 2}$ for some $a\in F^\times$ or $\disc(\nu)=b\cdot K^{\times 2}$ for some $b\in F^\times$.
\begin{itemize}
\item[--] If  $\disc(\nu)=at\cdot K^{\times 2}$ (resp. $b\cdot K^{\times 2}$) then there exist quaternion generators $i,j$ of $H$ such that $i^2=at$,  $j^2=b$ and $\nu=\Int(i)\circ \gamma$ (resp. $\nu=\Int(j)\circ \gamma$).
\item[--] If $H$ is split, then $(H, \nu)=\Ad_{\langle\langle at \rangle\rangle}$.
\end{itemize}
\end{itemize}
\end{lemma}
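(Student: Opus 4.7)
The plan is to use the standard classification of quaternion algebras over a Henselian discretely valued field, together with the structure of the norm group of a ramified quadratic extension.

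For part~(1), since $\ch F\ne 2$, Hensel's lemma implies that $K\m/K^{\times 2}$ is generated by the image of $F\m/F^{\times 2}$ and the class of $t$. Hence any quaternion $K$-algebra is isomorphic to $(x,y)_K$ for some $x,y\in F\m\cup tF\m$. Using the identity $(x,y)\simeq(x,-xy)$, the case $x,y\in tF\m$ (say $x=at$, $y=bt$) reduces to $(at,-ab)_K$ with $-ab\in F\m$. So up to isomorphism either $H\simeq(a,b)_K$, which is the scalar extension of $(a,b)_F$ and hence defined over $F$, or $H\simeq(at,b)_K$, as claimed.

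For part~(2), the same description of $K\m/K^{\times 2}$ yields the dichotomy for $\disc(\nu)$. Writing the orthogonal involution as $\nu=\Int(u)\circ\gamma$ for an invertible pure quaternion $u$ (so $\gamma(u)=-u$), one has $u^2\in K\m$ and $\disc(\nu)$ equals $-u^2$ up to squares. Rescaling $u$ by an element of $K\m$ (which does not change $\Int(u)$) produces, in case~A, an element $i$ with $i^2=at$ and $\nu=\Int(i)\circ\gamma$, and analogously in case~B an element $j$ with $j^2=b$ and $\nu=\Int(j)\circ\gamma$. To construct the second generator in case~A, I would observe that the subspace of $H$ of elements anti-commuting with $i$ is a free rank-one right $K(i)$-module, generated by any $j_0$ with $ij_0=-j_0i$; for $j'=(\alpha+\beta i)j_0$ one computes $(j')^2=(\alpha^2-at\beta^2)j_0^2=\Nrd_{K(i)/K}(\alpha+\beta i)\cdot j_0^2$. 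Thus the squares of elements anti-commuting with $i$ form the coset $j_0^2\cdot\Nrd_{K(i)/K}(K(i)\m)$ in $K\m$.

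It remains to show $K\m=F\m\cdot\Nrd_{K(i)/K}(K(i)\m)$: the norm group contains $1+tF[[t]]\subset K^{\times 2}$ (by Hensel's lemma) and $-at=\Nrd(\sqrt{at})$, so together with $F\m$ it generates all of $K\m$. Hence some representative of the coset above lies in $F\m$, giving $j$ with $j^2=b\in F\m$ and $ij=-ji$. Case~B is symmetric, with the roles of $i$ and $j$ exchanged. In the split subcase $H\simeq M_2(K)$, writing $(H,\nu)\simeq\Ad_\varphi$ for a binary form $\varphi$ and matching discriminants forces $\varphi\sim\langle 1,-at\rangle=\langle\langle at\rangle\rangle$, giving $(H,\nu)=\Ad_{\langle\langle at\rangle\rangle}$. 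The main obstacle is the norm-group identity just stated, which is routine but requires care with valuations; sign conventions for $\disc(\nu)$ can be absorbed into the choice of $a\in F\m$.
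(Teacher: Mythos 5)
Part (1) and the ``case A'' half of part (2) are correct, and in case A your route differs slightly from the paper's: where the paper takes an arbitrary $j$ anticommuting with $i$ and, if $j^2\in tF\m$ up to squares, simply replaces $j$ by $ij^{-1}$ (whose square is $-i^2j^{-2}\in F\m$), you instead compute the coset of squares of anticommuting elements as $j_0^2\cdot N_{K(i)/K}(K(i)\m)$ and use the identity $K\m=F\m\cdot N_{K(i)/K}(K(i)\m)$, which holds precisely because $K(i)=K(\sqrt{at})$ is \emph{ramified} (its norm group contains $-at$, an element of odd valuation). That argument is sound. One small correction: for $\nu=\Int(u)\circ\gamma$ with $u$ a pure quaternion one has $\disc(\nu)=-\Nrd(u)=u^2$ modulo squares, not $-u^2$; since the lemma requires $i^2$ to be a representative of $\disc(\nu)$ itself, this sign cannot be ``absorbed into the choice of $a$''.

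The genuine gap is the sentence ``Case B is symmetric, with the roles of $i$ and $j$ exchanged.'' It is not. In case B the first generator is $j$ with $j^2=b\in F\m$, and $K(j)=K(\sqrt b)$ is \emph{unramified}, so every element of $N_{K(j)/K}(K(j)\m)$ has even $t$-adic valuation; hence $tF\m\cdot N_{K(j)/K}(K(j)\m)$ consists only of odd-valuation elements and the analogue of your norm identity fails. Consequently the coset $i_0^2\cdot N_{K(j)/K}(K(j)\m)$ of squares of elements anticommuting with $j$ meets $tF\m$ if and only if $i_0^2$ has odd valuation, and that is exactly what must be proved: for $H=(c,b)_K$ with $c,b\in F\m$ and $\nu=\Int(j)\circ\gamma$ the coset is $c\cdot N_{K(j)/K}(K(j)\m)$, entirely of even valuation, and no generator $i$ with $i^2\in tF\m$ exists. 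Such $(H,\nu)$ are of course defined over $F$, so they are excluded by hypothesis --- but your argument never invokes the hypothesis that $(H,\nu)$ is not defined over $F$, which is where the paper's proof does the real work (any $i$ anticommuting with $j$ has, up to scaling, $i^2\in F\m$ or $i^2\in tF\m$, and the first possibility would make $(H,\nu)$ defined over $F$). The same hypothesis is also needed, and silently assumed, in your split sub-case: you must first rule out $\disc(\nu)\in F\m\cdot K^{\times 2}$ (which would give $(H,\nu)\simeq\Ad_{\langle 1,-b\rangle}$, defined over $F$) before ``matching discriminants'' can force $(H,\nu)\simeq\Ad_{\langle\langle at\rangle\rangle}$.
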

\begin{proof}
We first recall that (see ~\cite[Cor. VI.1.3]{Lam05}) 
\[
K\m/K^{\times 2}=\{\alpha \cdot K^{\times 2}\, \vert\, \alpha \in F\}\cup\{\alpha t\cdot K^{\times 2}\, \vert\, \alpha\in F\}.
\]
(1) Let $q$ be a pure quaternion in $H$;  we have $q^2\in K\m/K^{\times 2}$. Up to scaling, we may assume that either $q^2= a t$ for some $a \in F\m$ or $ q^2=b$ for some $b\in F\m$. Since we assume that $H$ is not defined over $F$, the algebra $H$ is either isomorphic to $H\simeq (at, b)$ or to $H\simeq (at, bt)\simeq (at, -ab)$.

(2) Now assume that $\nu$ is orthogonal. By ~\cite[(2.7)]{KMRT98}, the involution $\nu$ has the form $\Int(s)\circ \gamma$ for some invertible $s\in H$ with $\gamma(s)=-s$.  \cite[(7.3)]{KMRT98} shows that  $\disc(\nu)=-\Nrd(s)\cdot K^{\times 2}\in K\m/K^{\times 2}$, where $\Nrd$ denotes the reduced norm. Therefore, either $\disc(\nu)= at \cdot K^{\times 2}$ for some $a\in F^\times$ or $\disc(\nu)=b \cdot K^{\times 2}$ for some $b\in F^\times$. If $\disc(\nu)=at \cdot K^{\times 2}$, we choose $i=s$.  One has $\nu=\Int(i)\circ \gamma$. Let $j\in H^\times$ be such that $j^2=y\in K\m/K^{\times 2}$ and $ij=-ji$ (recall that such a $j$ always exists). Up to scaling we may assume $y\in F\m$ or $y\in F\m\cdot t$.  We have the isomorphism $(H,\nu)\simeq ((at, b),\Int(i)\circ \gamma)$ if $y=b\in F\m$ and $(H,\nu)\simeq ((at, bt),\Int(i)\circ \gamma)$ if $y$ has the form $y=bt$. In the latter case, we may substitute $ij^{-1}$ for $j$ and reduce to the case where $j^2\in F\m$.

Likewise, if $\disc(\nu)=b \cdot K^{\times 2}$ for some $b\in F\m$, we choose $j=s$. So  $\nu= \Int(j)\circ \gamma$. Let $i\in H\m$ be such that $i^2\in K\m/K^{\times 2}$ and $ij=-ji$. Since $(H,\nu)$ is not defined over $F$, up to scaling we have necessarily $i^2=at$ for some $a\in F\m$. This yields that $(H,\nu)\simeq ((at, b),\Int(j)\circ \gamma)$.

Now, suppose $H$ is split. If so, $(H,\nu)\simeq \Ad_{\langle 1, -\disc(\nu) \rangle}$. Since it is not defined over $F$, we get $\disc(\nu)= at\cdot K^{\times 2}$ for some $a\in F\m$, that is, $(H,\nu)\simeq \Ad_{\langle\langle at \rangle\rangle}$.
 \end{proof}
\begin{lemma}\label{lm3.2}
Let $(H_1,\nu_1)$, $(H_2,\nu_2)$ be quaternion algebras with anisotropic involutions over $K=F((t))$.
There exist quaternion algebras with involution $(H'_1, \nu'_1)$, $(H'_2, \nu'_2)$ with $(H'_1, \nu'_1)$ defined over $F$ such that
\[
(H_1, \nu_1)\otimes (H_2, \nu_2) \simeq (H'_1, \nu'_1)\otimes (H'_2, \nu'_2).
\]
Moreover,
\begin{itemize}
\item[--] if $(H_1,\nu_1)$ and $(H_2,\nu_2)$ are split, we may find $(H'_1,\nu'_1)$ and $(H'_2,\nu'_2)$ such that both are split;
\item[--] if $(H_1,\nu_1)$ is split and $(H_2,\nu_2)$ is not split, we may find $(H'_1,\nu'_1)$ and $(H'_2,\nu'_2)$ such that $(H'_1,\nu'_1)$ is split and $(H'_2,\nu'_2)$ is not split.
\end{itemize}
\end{lemma}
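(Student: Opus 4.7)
My approach is to reduce to the case where neither factor is defined over $F$ and then apply an explicit algebra isomorphism, inspired by the Remark following Thm.~\ref{thm2.1}. If either $(H_1,\nu_1)$ or $(H_2,\nu_2)$ is already defined over $F$, then one is done immediately by taking $(H'_i,\nu'_i) := (H_i,\nu_i)$ for both $i$ (renumbering if necessary). So the substance of the proof is the case when neither is defined over $F$. Lemma~\ref{lm3.1} then gives $H_i \simeq (a_i t, b_i)$ for suitable $a_i,b_i \in F^\times$, with $\nu_i$ in the explicit normal form dictated by its type and, when orthogonal, by whether $\disc(\nu_i) \in F\cdot K^{\times 2}$ or $\disc(\nu_i) \in Ft\cdot K^{\times 2}$.

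The key identity is the one already exhibited in the Remark, namely
\[
((a,b),\gamma)\otimes \Ad_{\langle\langle t\rangle\rangle}\;\simeq\;((at,b),\nu)\otimes ((t,b),\gamma),
\]
where $\nu$ is the orthogonal involution on $(at,b)$ with discriminant $at$. Reading this right-to-left trades a non-$F$-defined quaternion factor for an $F$-defined one at the cost of modifying the other factor. Minor variants of it (swapping the roles of the generators $i\leftrightarrow j$ when $\disc(\nu_k)\in F\cdot K^{\times 2}$, or conjugating the canonical involution by an appropriate symmetric pure quaternion) cover all the combinations of types and discriminants allowed by Lemma~\ref{lm3.1}(2) for $(H_1,\nu_1)$ and $(H_2,\nu_2)$. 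In each such subcase one applies the relevant variant to the $16$-dimensional biquaternion $H_1\otimes H_2$, producing a new decomposition in which one factor is an $F$-defined quaternion with involution.

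For the \emph{moreover} clauses one simply tracks the effect of the exchanges on the splitness of each factor. When both $(H_i,\nu_i)$ are split and not defined over $F$, Lemma~\ref{lm3.1}(2) forces $(H_i,\nu_i)\simeq \Ad_{\langle\langle a_i t\rangle\rangle}$ and the statement reduces to a Pfister-form manipulation: using $\langle\langle a_1 t, a_2 t\rangle\rangle\simeq \langle\langle -a_1 a_2, a_1 t\rangle\rangle$ one obtains
\[
\Ad_{\langle\langle a_1 t\rangle\rangle}\otimes \Ad_{\langle\langle a_2 t\rangle\rangle}\;\simeq\;\Ad_{\langle\langle -a_1 a_2\rangle\rangle}\otimes \Ad_{\langle\langle a_1 t\rangle\rangle},
\]
and the first factor on the right is split and defined over $F$. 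In the mixed case (one split, one not split), one uses the generic identity with a careful choice of generators so that the $t$-ramification is swapped between the split and the non-split factor, keeping the split one split and the non-split one non-split.

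The main obstacle is the bookkeeping in the case analysis of the second paragraph: one must verify for each of the several discriminant/type configurations that a variant of the key identity applies, and, in the \emph{moreover} clauses, that the realignment of generators preserves the splitness pattern. Once the right variant is pinned down in each subcase, the verification reduces to an explicit matching of quaternion generators inside the biquaternion $H_1\otimes H_2$.
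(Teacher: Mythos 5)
Your overall strategy---reduce via Lemma~\ref{lm3.1} to $H_\ell\simeq(a_\ell t,b_\ell)$ with generators normalized to be $\pm$-eigenvectors of $\nu_\ell$, and then re-pair generators inside the biquaternion $H_1\otimes H_2$---is the right one, and your treatment of the both-split case via $\langle\langle a_1t,a_2t\rangle\rangle\simeq\langle\langle -a_1a_2,a_1t\rangle\rangle$ is correct. However, the central step of the lemma is never actually carried out. You defer the general case to ``minor variants'' of the identity quoted from the Remark, but that identity, read right-to-left, only rewrites $((at,b),\nu)\otimes((t,b),\gamma)$, i.e.\ it requires the two non-$F$-defined factors to share the same second slot ($b_1=b_2$ up to squares) and one of them to be $(t,b)$ with the canonical involution (equivalently, the $F$-defined side to be $\Ad_{\langle\langle t\rangle\rangle}$ or $\gamma$). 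For arbitrary $(a_1t,b_1)$, $(a_2t,b_2)$ with independent $b_1,b_2$ and arbitrary anisotropic $\nu_1,\nu_2$, no variant is exhibited, and the mixed split/non-split case is only described (``swap the $t$-ramification'') rather than proved. Since producing the required identity in each configuration is exactly the content of the lemma, this is a genuine gap, not mere bookkeeping.

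For comparison, the paper does this in one stroke, with no case analysis on involution types or discriminants: choosing $i_\ell,j_\ell$ with $i_\ell^2=a_\ell t$, $j_\ell^2=b_\ell$, $\nu_\ell(i_\ell)=\pm i_\ell$, $\nu_\ell(j_\ell)=\pm j_\ell$ (Lemma~\ref{lm3.1}), it takes $H_1'\subset H_1\otimes H_2$ generated by $t^{-1}i_1\otimes i_2$ and $j_1\otimes 1$, and $H_2'$ generated by $1\otimes i_2$ and $j_1\otimes j_2$. Then $H_1'\simeq(a_1a_2,b_1)$ is defined over $F$, $H_2'\simeq(a_2t,b_1b_2)$ is its centralizer, and both are stable under $\nu_1\otimes\nu_2$ precisely because all four new generators are $\pm$-eigenvectors---this is where the eigenvector normalization does all the work and renders your subcase analysis unnecessary. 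The ``moreover'' clauses then follow by observing that splitness of $(H_\ell,\nu_\ell)$ forces $b_\ell\in F^{\times 2}$ (Springer's theorem applied to $\langle 1,-b_\ell\rangle\perp\langle -a_\ell t\rangle\langle 1,-b_\ell\rangle$), so the splitness pattern of $(a_1a_2,b_1)$ and $(a_2t,b_1b_2)$ is as required. Until you write down such generators (or a genuinely general exchange identity) and verify stability under $\nu_1\otimes\nu_2$ together with the splitness tracking, your argument does not yet prove the lemma.
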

\begin{proof}
If $(H_1,\nu_1)$ or $(H_2,\nu_2)$ is defined over $F$, there is nothing to show. Otherwise, write $H_1=(a_1t, b_1)$ and $H_2=(a_2t, b_2)$ for some  $a_1, a_2, b_1, b_2\in F$. By Lemma \ref{lm3.1}, there exist quaternion generators  $i_1, j_1$ and $i_2, j_2$  of $H_1$ and $H_2$ respectively such that  $i_\ell^2=a_\ell t, j_\ell^2=b_\ell$,  and $\nu_\ell(i_\ell)=\pm i_\ell$ and $\nu_\ell(j_\ell)=\pm j_\ell$ for $\ell= 1, 2$. Let $H_1', H_2'\subset H_1\otimes H_2$ be quaternion subalgebras generated by $t\inv i_1\otimes i_2$, $j_1\otimes 1$ and $1\otimes i_2$, $j_1\otimes j_2$ respectively. In fact, $H_1'=(a_1a_2, b_1)$ and $H_2'=(a_2t, b_1b_2)$. We easily check that $H_1'$ and $H_2'$ are stable under $\nu_1\otimes \nu_2$. Denote by $\nu'_\ell$ (for $\ell=1,2$) the restriction of $\nu_1\otimes \nu_2$ to $H'_\ell$.  We have the isomorphism
\[
(H_1,\nu_1)\otimes (H_2,\nu_2)\simeq (H'_1,\nu'_1)\otimes (H'_2,\nu'_2) 
\]
with $(H'_1,\nu'_1)$ defined over $F$.

Now, if $(H_1,\nu_1)$ and $(H_2,\nu_2)$ are split, then the quadratic forms $\langle 1, -b_\ell \rangle\perp \langle -a_\ell t \rangle\langle 1, -b_\ell \rangle$ are isotropic (for $\ell=1,2$), hence $b_1$ and  $b_2$ are squares.  Thus, the algebras $H_1'=(a_1a_2, b_1)$ and $H_2'=(a_2t, b_1b_2)$ are split. Finally, if $(H_1,\nu_1)$ is split and $(H_2,\nu_2)$ is not split, then $b_1\in F^{\times 2}$ and $b_2\notin F^{\times 2}$. So $H'_1$ is split and $H'_2$ is not split.  This concludes the proof.
\end{proof}
\begin{proposition}\label{prop3.1}
The algebra with involution $(S_1,\sigma)$ is as Thm.~\ref{thm2.0}. Assume $(S_1,\sigma)$ is totally decomposable with $(n+1)$ split factors. Then $(S_1,\sigma)\simeq \Ad_{\langle\langle at\rangle\rangle}\otimes (S',\sigma')$ for some $a\in F$ and some totally decomposable algebra with involution $(S',\sigma')$ with $n$ split factors. Moreover, each factor of $(S',\sigma')$ is defined over $F$.
\end{proposition}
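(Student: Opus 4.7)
The plan is to rearrange any decomposition of $(S_1,\sigma)$ via Lemma~\ref{lm3.2} into one in which at most one factor fails to be defined over $F$, and then to identify that stray factor as $\Ad_{\langle\langle at\rangle\rangle}$ by computing the residue at degree zero of the $\sigma$-special gauge $g_1$ (Prop.~\ref{prop01}(1)). Fix a decomposition $(S_1,\sigma)\simeq \bigotimes_{i=1}^m (H_i,\nu_i)$ realizing $n+1$ split factors; since $\sigma$ is anisotropic (as observed in the proof of Prop.~\ref{prop01}(1)), so is each $\nu_i$.

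Apply Lemma~\ref{lm3.2} iteratively to pairs of factors neither of which is defined over $F$; each application strictly reduces the number of such factors, so after finitely many steps we may assume that at most one factor $(H_0,\nu_0)$ is not defined over $F$. The split count can only stay the same or increase during this process: the two explicit bullets of Lemma~\ref{lm3.2} (handling the split--split and split--nonsplit cases) preserve the split count, while in the nonsplit--nonsplit case the input split count is zero and the output is non-negative. Since the co-index of $S_1=S\otimes_F M_2(K)$ is $2^{n+1}$, which caps the total split count in any quaternion decomposition, the split count is exactly $n+1$ after rearrangement. Moreover, not every factor can be defined over $F$: otherwise $(S_1,\sigma)$ would descend to an $F$-form and the natural extension of the $v$-adic valuation would give a $\sigma$-special gauge of grade group $\Z$, which by the uniqueness in Prop.~\ref{prop01}(1) must coincide with $g_1$; but $g_1$ has grade group $\tfrac12\Z$ by Example~\ref{example}(1), a contradiction. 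So exactly one factor $(H_0,\nu_0)$ is not defined over $F$.

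By the uniqueness of the $\nu_i$-special gauges and the compatibility of special gauges with tensor products, $g_1$ coincides with $h_0\otimes\bigotimes_{i\ge 1} h_i$, where $h_i$ is the $\nu_i$-special gauge on $(H_i,\nu_i)$. Letting $(T,\rho')$ denote the $F$-form of $\bigotimes_{i\geq 1}(H_i,\nu_i)$, a computation in degree zero gives
\[
\gr_{g_1}(S_1,\sigma)_0\;\simeq\; \gr_{h_0}(H_0,\nu_0)_0\otimes_F (T,\rho').
\]
The left-hand side is $(S\times S,\rho\times\rho)$ by Prop.~\ref{prop01}(1) and Example~\ref{example}(1). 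A case analysis using Lemma~\ref{lm3.1} shows that $\gr_{h_0}(H_0,\nu_0)_0$, as an $F$-algebra with involution, is either $(F(\sqrt b),\mathrm{id})$ or $(F(\sqrt b),\text{Galois conjugation})$ for some non-square $b\in F^\times$ (when $H_0$ is non-split), or $(F\times F,\mathrm{id})$ (when $(H_0,\nu_0)\simeq\Ad_{\langle\langle at\rangle\rangle}$). Only in this last case is the tensor product with the central simple $(T,\rho')$ a semisimple (non-simple) $F$-algebra with component-wise involution, matching $(S\times S,\rho\times\rho)$. Hence $(H_0,\nu_0)\simeq \Ad_{\langle\langle at\rangle\rangle}$ for some $a\in F^\times$, and $(T,\rho')\simeq (S,\rho)$. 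Setting $(S',\sigma')=\bigotimes_{i\geq 1}(H_i,\nu_i)$ yields the decomposition claimed, with $n$ split factors. The main obstacle I anticipate is this residue case analysis: one must carefully determine, for each type of non-$F$-defined quaternion algebra with involution listed in Lemma~\ref{lm3.1}, the degree-zero residue together with its involution, and verify that only the $\Ad_{\langle\langle at\rangle\rangle}$ case produces the semisimple, component-wise shape imposed by the residue of the $\sigma$-special gauge.
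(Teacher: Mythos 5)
Your proposal follows essentially the same route as the paper: rearrange the decomposition via Lemma~\ref{lm3.2} until at most one factor fails to be defined over $F$, then use the uniqueness of the $\sigma$-invariant gauge $g_1$ and the non-simplicity of its residue $(S\times S,\rho\times\rho)$ to rule out the case where the stray factor is a division algebra, and finally invoke Lemma~\ref{lm3.1}(2) to identify it with $\Ad_{\langle\langle at\rangle\rangle}$; the paper packages the residue comparison by explicitly constructing a competing gauge $g_3$ from the valuation on the putative division factor, while you phrase it as ``$g_1$ is the tensor product of the special gauges of the factors.'' Two small points. First, your assertion that the co-index of $S_1$ equals $2^{n+1}$ is unjustified: Theorem~\ref{thm2.0} places no restriction on the co-index of $S$, so the split count after rearrangement need not be exactly $n+1$; fortunately exactness is never used, since the monotonicity you establish from the bullets of Lemma~\ref{lm3.2} already gives at least $n+1$ split factors, which suffices (the paper's WLOG-maximality serves the same dispensable purpose). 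Second, the ``compatibility of special gauges with tensor products'' deserves a word of proof: one checks that the tensor product value function of the $h_i$ is $\sigma$-invariant and is a gauge because the associated graded algebras are graded central simple over $\gr_v(K)$ (all gauges here are tame, $\operatorname{char}\overline K\neq 2$), so their tensor product over $\gr_v(K)$ is graded simple; uniqueness from Prop.~\ref{prop01} then identifies it with $g_1$, and the degree-zero computation also uses that the special gauges on the $F$-defined factors are unramified with residue the $F$-forms. With these justifications supplied, your case analysis of $\gr_{h_0}(H_0,\nu_0)_0$ (field $F(\sqrt b)$ versus $F\times F$) is correct and is the same dichotomy the paper exploits.
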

\begin{proof}
Write $(S_1,\sigma)\simeq \bigotimes_{k=1}^{m}(Q_k,\sigma_k)$. It follows by part (1) of Lemma \ref{lm3.1} that each $Q_k$ is either defined over $F$ or isomorphic to a quaternion algebra of the form $(at, b)$ for some $a,b\in F$. Applying repeatedly   Lemma \ref{lm3.2}, we get a total decomposition of $(S_1,\sigma)$ in which each factor is defined over $F$ except only one factor.  

We show that the number of  split factors in the decomposition of  $(S_1,\sigma)$ does not change by applying  Lemma \ref{lm3.2}. Without loss of generality we may assume that $(S_1,\sigma)$ is not totally decomposable with more than $(n+1)$ split factors. Let $(Q_i, \sigma_i)$ and $(Q_j, \sigma_j)$ be two factors of $(S_1,\sigma)$. If $(Q_i, \sigma_i)$ and $(Q_j, \sigma_j)$ both are split,  Lemma \ref{lm3.2} shows that the tensor product $(Q_i, \sigma_i)\otimes(Q_j, \sigma_j)$ is exchanged for another one in which each factor is split. If $(Q_i, \sigma_i)$ is split and $(Q_j, \sigma_j)$ is not split, again Lemma \ref{lm3.2} shows that the product $(Q_i, \sigma_i)\otimes(Q_j, \sigma_j)$ is exchanged for a product in which only one factor is split. Finally, suppose $(Q_i, \sigma_i)$ and $(Q_j, \sigma_j)$ both are division algebras with involution. Since we assume that $(S_1,\sigma)$ is not totally decomposable with more than $(n+1)$ split factors, the product $(Q_i, \sigma_i)\otimes(Q_j, \sigma_j)$ is necessarily exchanged for another one in which each factor is a division algebra with involution. It then follows that the number of  split factors in the decomposition of  $(S_1,\sigma)$ does not change by applying  Lemma \ref{lm3.2}. 

Let us denote by $((at,b), \sigma_r)$ the   factor of $(S_1,\sigma)$ which is not defined over $F$.  We claim that  $((at,b), \sigma_r)\simeq \Ad_{\langle\langle a't\rangle\rangle}$ for some $a'\in F\m$. To see this, we first show that $(at, b)$ is split.  Assume the contrary, that is, $(at, b)$ is a division quaternion algebra over $K$. On the one hand, according to Prop.~\ref{prop01}, there exists a unique gauge $g_1$ on $S_1$ that is invariant under $\sigma$ and its residue algebra is $\gr_{g_1}(S_1)_0=S\times S$ with the induced involution $\rho\times \rho$. On the other hand, write $(S_1,\sigma)\simeq ((at, b),\sigma_r)\otimes (S',\sigma')$ where $S'$ is the centralizer of $(at, b)$ in $S_1$. As we showed above, $(S',\sigma')$ is totally decomposable with $n$ split factors and each factor of $(S',\sigma')$ is defined over $F$. Now, we construct the same gauge on $S_1$ by taking into account the fact that $(at, b)$ is a division algebra: the field $K$ being Henselian, the $t$-adic valuation  $v$ of $K$ extends to a valuation $w$ on $(at, b)$. Let $(a_i)_{1\le i\le s}$ be an $F$-basis of $S'$. So every element in $S_1$ has a unique representation $\sum_i q_i \otimes a_i$ for some $q_i\in (at, b)$. It is easy to verify that the map $g_3:\, S_1\longrightarrow \tfrac{1}{2}\Z\cup\{\infty\}$ defined by 
\[
g_3(\sum_i q_i \otimes a_i)= \min\{w(q_i)\,\vert\, 1\le i\le s\}
\]
is a $v$-gauge on $S_1$. Moreover, it follows from the definition  that $g_3$ is invariant under $\sigma$ and $g_3(\sigma(a)a)=2g_3(a)$ for all $a\in S_1$. Hence, $g_3$ is the unique $v$-gauge on $S_1$ that is preserved by $\sigma$ by the same arguments as above. Therefore $g_3=g_1$ by uniqueness arguments. This leads to a contradiction, since the residue  of $S_1$ with respect to $g_3$ is  $\gr_{g_3}(S_1)_0\simeq A'\otimes F(\sqrt{b})$, which is simple, as opposed to the residue of $S_1$ with respect to $g_1$, which is not simple.  Thus the quaternion algebra $(at,b)$ is split. Since the involution $\sigma$ is anisotropic, $\sigma_r$ is necessarily orthogonal. It Then follows by part (2) of Lemma \ref{lm3.1} that $((at,b), \sigma_r)\simeq \Ad_{\langle\langle a't\rangle\rangle}$ for some $a'\in F\m$. Therefore $(S_1,\sigma)\simeq \Ad_{\langle\langle a't\rangle\rangle}\otimes (S',\sigma')$. We have the stated description of  $(S_1,\sigma)$.
 \end{proof}
\begin{proof}[Proof of Theorem \ref{thm2.0}]
Assume that $(S_1,\sigma)$ is totally decomposable with $(n+1)$ split factors. By Prop.~\ref{prop3.1}, $(S_1,\sigma)\simeq \Ad_{\langle\langle at\rangle\rangle}\otimes (S',\sigma')$, where $(S',\sigma')$ is totally decomposable with $n$ split factors, and each factor of $(S',\sigma')$ is defined over $F$. We write  $(S',\sigma')\simeq \otimes_{\ell=2}^m(Q_\ell, \sigma_\ell)\otimes K$. Consider the gauge $g_1$ on $S_1$ from  Prop.~\ref{prop01}, and recall that the residue algebra with involution  of $(S_1,\sigma)$ is $\gr_{g_1}(S_1,\sigma)_0\simeq (S\times S, \rho\times \rho)$. For $\ell=2,\ldots, m$, let $i_\ell, j_\ell\in S'$ be such that 
\begin{equation}\label{eq2.2}
i_\ell^2=a_\ell,\quad j_\ell^2=b_\ell, \quad i_\ell j_\ell=-j_\ell i_\ell,\quad \sigma'(i_\ell)=\pm i_\ell,\quad \sigma'(j_\ell)=\pm j_\ell,
\end{equation}
so $Q_\ell=(a_\ell, b_\ell)$ for all $\ell$. Since $g_1(\sigma(a)a)=2g_1(a)$ for all $a\in S_1$,  we have $g_1(i_\ell)= \tfrac{1}{2}g_1(\pm a_\ell)=0$ for all $\ell$.  Similarly, $g_1(j_\ell)= \tfrac{1}{2}g_1(\pm b_\ell)=0$ for all $\ell$. The images  $\widetilde {i_\ell}, \widetilde{j_\ell}$ of $i_\ell, j_\ell$ in $\gr_{g_1}(S_1)_0\simeq S\times S$ satisfy conditions similar to $(\ref{eq2.2})$.  The involution $\sigma$ being preserved by $g_1$, it induces an involution $\widetilde \sigma$ on $\gr_{g_1}(S_1)$. Consider a projection $\gr_{g_1}(S_1)_0\to S$. This projection is a homomorphism of algebras with involution $p: \, (\gr_{g_1}(S_1), \widetilde\sigma)_0\to (S,\rho)$. The images $p(\widetilde{i_\ell}), p(\widetilde{j_\ell})$ generate a copy of $\otimes_{\ell=2}^m(Q_\ell,\sigma_\ell)$ in $(S,\rho)$. Thus, we have the isomorphism $\otimes_{\ell=2}^m(Q_\ell,\sigma_\ell)\simeq (S,\rho)$ by dimension count. Therefore $(S, \rho)$ is totally decomposable with $n$ split factors.
\end{proof}
\begin{corollary}\label{cor4.1}
Let $(A',\sigma')$ be the algebra with orthogonal involution of Prop.~\ref{sivatski}. The algebra with involution $\Ad_{\langle\langle t\rangle\rangle}\otimes (A',\sigma')$ is not totally decomposable with $2$ split factors.
\end{corollary}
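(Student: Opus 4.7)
The plan is to derive Corollary~\ref{cor4.1} as a direct contrapositive consequence of Theorem~\ref{thm2.0}, using Sivatski's algebra $(A',\sigma')$ from Proposition~\ref{sivatski} as the source of the obstruction. First I would match notation: setting $S = A'$ and $\rho = \sigma'$ places us exactly in the framework of Section~\ref{statement}, so that $\Ad_{\langle\langle t\rangle\rangle}\otimes (A',\sigma')$ is precisely the algebra $(S_1,\sigma)$ of Theorem~\ref{thm2.0}. I would then argue by contradiction: if $(S_1,\sigma)$ admitted a total decomposition into quaternion algebras with involution containing two split factors, Theorem~\ref{thm2.0} applied with $n = 1$ would force $(A',\sigma') = (S,\rho)$ itself to be totally decomposable with one split factor, directly contradicting Proposition~\ref{sivatski}.

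Before invoking Theorem~\ref{thm2.0}, I would check that its standing hypotheses are met, i.e.\ that $\sigma'$ is anisotropic, as this is needed for the $\sigma$-special gauge $g_1$ of Proposition~\ref{prop01} (and hence for Proposition~\ref{prop3.1}) to apply. Anisotropy is automatic in Sivatski's situation: were $\sigma'$ isotropic, an orthogonal involution in co-index~$2$ would split off a hyperbolic plane, yielding a decomposition of $(A',\sigma')$ with $\Ad_{\langle 1,-1\rangle}$ as a split factor, again contradicting Proposition~\ref{sivatski}. No genuine obstacle arises, and the entire corollary reduces to a one-line application of Theorem~\ref{thm2.0}; in particular no further gauge-theoretic input is required beyond what is already packaged into that theorem.
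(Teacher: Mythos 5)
Your proof is correct and follows exactly the paper's own argument: the corollary is the contrapositive of Theorem~\ref{thm2.0} with $n=1$, with Proposition~\ref{sivatski} supplying the contradiction. Your extra verification that $\sigma'$ is anisotropic (needed for the standing hypotheses of Section~\ref{statement}, and left implicit in the paper since Sivatski's involution is anisotropic by construction) is a welcome addition, though as stated it tacitly uses that the hyperbolic orthogonal involution on $M_2(D)$ is unique up to isomorphism, so that the complementary biquaternion factor may be taken with a decomposable involution.
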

\begin{proof}
Assume the contrary, that is, $\Ad_{\langle\langle t\rangle\rangle}\otimes (A',\sigma')$ is totally decomposable with $2$ split factors. It follows by Thm.~\ref{thm2.0} that $(A',\sigma')$ is totally decomposable with $1$ split factor; that is impossible. Hence the corollary is proved.
\end{proof}
\begin{remark}\label{rem4.1}
The construction in Cor.~\ref{cor4.1} can be iterated to obtain examples of  algebras with orthogonal involution of degree  $2^m$ (for $m \geq 4$) and co-index $2^{m-2}$  that  are not totally decomposable with $(m-2)$ split factors although the underlying algebras are totally decomposable with $(m-2)$ split factors.
\end{remark}
\section{Decomposition with arbitrarily large index}\label{largeIndex}
The main object of this section is to prove Thm.~\ref{thm2.1}. This theorem leads to  examples of  totally decomposable algebras with involution of degree $2^m$ (for $m\geq 3$) and co-index $2$  which are not totally decomposable with $1$ split factor.  For the proof of Thm.~\ref{thm2.1}  we need some preliminary  results. Let $E$ be an arbitrary central simple algebra of exponent $2$ over $F$. As in part~\ref{statement} of Section \ref{section1}, $Q$ denotes the quaternion algebra $(t_1,t_2)$  over the field $L=F((t_1))((t_2))$. We set 
\[
C=E\otimes_F Q.
\]
Consider the gauge as constructed in  part (2) of Example \ref{example}, where we take $E$ instead of $S$. Here, we denote this gauge by $\varphi$. Let $\theta$ be an $L$-linear anisotropic involution on $C$ such that $\varphi$ is invariant under $\theta$. Let us denote by $\theta_0$ the induced involution on $\gr_\varphi (C)_0=E$. We have the following:

\begin{lemma}\label{lm2.2}
The algebras with involution $(C,\theta)$ and $(E, \theta_0)$ are as above. Denote by $\gamma$ the canonical involution  on $Q$ and let  $1,i,j,ij$ be a quaternion base of $Q$ such that $i^2=t_1$ and $j^2=t_2$. We have the decomposition
\[
(C,\theta)\simeq (E, \theta_0)\otimes_F(Q, \Int(u)\circ\gamma),
\]
where $u$ is one of the elements $1,i,j,ij$. 
\end{lemma}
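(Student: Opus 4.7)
The plan is to exploit the graded structure of $(C,\theta)$: first I would analyse the induced involution $\widetilde\theta$ on $\gr_\varphi(C)=E\otimes_F(\widetilde t_1,\widetilde t_2)_{\gr_v(L)}$, where the grading forces it into a very restricted form on the graded quaternion subalgebra, and then lift the resulting tensor decomposition back to $C$ using the Henselian hypothesis on $L$. The degree-zero component $E$ of $\gr_\varphi(C)$ is stable under $\widetilde\theta$ (with $\widetilde\theta|_E=\theta_0$), and since $\widetilde\theta$ is an anti-automorphism, it also stabilises the centraliser $Z_{\gr_\varphi(C)}(E)=(\widetilde t_1,\widetilde t_2)_{\gr_v(L)}=\gr(Q)$.

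The classification step goes as follows. Because $\widetilde\theta$ is $\gr_v(L)$-linear and graded of degree~$0$, and because the grade $(1/2,0)$ (resp.\ $(0,1/2)$) component of $\gr(Q)$ is the one-dimensional $F$-space $F\widetilde i$ (resp.\ $F\widetilde j$), one has
\[
\widetilde\theta(\widetilde i)=\alpha_i\widetilde i,\qquad \widetilde\theta(\widetilde j)=\alpha_j\widetilde j
\]
with $\alpha_i,\alpha_j\in F$; applying $\widetilde\theta$ twice gives $\alpha_i,\alpha_j\in\{\pm 1\}$. A short computation of $\Int(u)\circ\gamma$ acting on $i$ and $j$, for each $u\in\{1,i,j,ij\}$, shows that these four choices realise precisely the four sign patterns $(\alpha_i,\alpha_j)\in\{\pm 1\}^2$, so I would take $u$ to be the one matching the observed signs. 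The graded involutions induced by $\theta$ and by $\theta_0\otimes(\Int(u)\circ\gamma)$ then agree on the generating set $E\cup\{\widetilde i,\widetilde j\}$ of $\gr_\varphi(C)$, hence everywhere.

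For the lift, set $\theta'\Dfn\theta_0\otimes(\Int(u)\circ\gamma)$, viewed as an involution on the same underlying $L$-algebra $E\otimes_F Q=C$. I would verify that $\varphi$ is $\theta'$-invariant, that $\varphi(\theta'(x)x)=2\varphi(x)$, and that $\theta'$ is anisotropic (its residue involution on $\gr_\varphi(C)_0=E$ is $\theta_0$, which is anisotropic by hypothesis, whence $\theta'$ is anisotropic by \cite[Cor.~2.3]{TW11}). By the uniqueness part of Prop.~\ref{prop01}, the gauge $\varphi$ is then $\theta'$-special as well, so $\theta$ and $\theta'$ are two $\varphi$-special anisotropic involutions of $C$ inducing the same graded involution. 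A Skolem--Noether plus Hensel argument now produces an inner automorphism of $C$ conjugating $\theta$ to $\theta'$: write $\theta\circ\theta'=\Int(c)$, note that the triviality of the induced automorphism on $\gr_\varphi(C)$ forces $\widetilde c$ to be central so that, after rescaling by an element of $L^\times$, one may assume $c=1+c_0$ with $\varphi(c_0)>0$, and then solve the intertwining equation $\theta'=\Int(h)\circ\theta\circ\Int(h)^{-1}$ by successive approximation in the complete valued field $L$.

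I expect the main obstacle to be this final lifting step: one must construct the intertwiner $h$ by an iteration that respects the filtration at every stage, and the convergence relies both on the Henselian hypothesis on $L$ and on the anisotropy of $\widetilde\theta$ (the latter is what underlies the rigidity embodied in the uniqueness of the special gauge and, in turn, allows the correction process to be carried out within the required filtration pieces).
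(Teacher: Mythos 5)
Your proposal is correct in substance and its first half coincides with the paper's proof: you analyse $\widetilde\theta$ on $\gr_\varphi(C)=E\otimes_F(\widetilde t_1,\widetilde t_2)_{\gr_v(L)}$, use the one-dimensionality of the grade $(\tfrac12,0)$ and $(0,\tfrac12)$ components over $E$ (or, as you do, over $F$ after passing to the centraliser of $E$) to get $\widetilde\theta(\widetilde i)=\pm\widetilde i$, $\widetilde\theta(\widetilde j)=\pm\widetilde j$, and match the four sign patterns with $\Int(\widetilde u)\circ\gamma$ for $u\in\{1,i,j,ij\}$, so that $\widetilde\theta=\theta_0\otimes(\Int(\widetilde u)\circ\gamma)$. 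Where you differ is the lifting step: the paper simply invokes the Tignol--Wadsworth lifting theorem \cite[Thm.~2.6]{TW11}, which says that over a Henselian field two anisotropic involutions on the same algebra inducing the same (isomorphic) graded involution are isomorphic, whereas you propose to reprove this in the case at hand via Skolem--Noether and a Hensel-type argument, reducing to $\theta'=\Int(c)\circ\theta$ with $\widetilde c$ central (hence, after scaling by $L^\times$, $c=1+c_0$ with $\varphi(c_0)>0$) and then solving $h\theta(h)=c$. That route does work, but the details you flag are exactly the content of the cited theorem: one must check that $\widetilde c$ commutes with every $\widetilde x$ even though $\widetilde c$ need not be invertible in $\gr_\varphi(C)$ (this follows from $\varphi(cx-xc)>\varphi(c)+\varphi(x)$), that the normalisation forces $\theta(c)=+c$ (the skew case is excluded since $\operatorname{char}\overline F\neq2$), and that the ``successive approximation'' must be justified either by a multivariable Hensel argument (the linearisation $s\mapsto 2s$ on symmetric elements is invertible) or by maximal completeness of $L=F((t_1))((t_2))$ --- naive Cauchy convergence is not available since the value group has rank $2$. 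Two minor inaccuracies: the anisotropy of $\theta_0$ is not a hypothesis but a consequence of the anisotropy of $\theta$ together with Henselianity (\cite[Thm.~2.2]{TW11}, as in Prop.~\ref{prop01}), and it is not actually needed for your lifting; so your argument is a self-contained substitute for the citation, at the cost of redoing a nontrivial piece of \cite{TW11}, while the paper's proof is shorter by outsourcing it.
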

\begin{proof}
As in Example \ref{example}, recall that
\[
\gr_\varphi(C)=E\otimes_F(\widetilde{t_1}, \widetilde{t_2})_{\gr_v(L)}
\]
where $v$ denotes the $(t_1, t_2)$-adic valuation on $L$. Since $\widetilde\theta$ induces $\theta_0$ on $E$, it suffices to evaluate the restriction $\widetilde\theta\vert_{\gr_\varphi(Q)}$ of $\widetilde\theta$ to $\gr_\varphi(Q)=(\widetilde{t_1}, \widetilde{t_2})_{\gr_v(L)}$. Notice that the degrees $(\tfrac{1}{2},0)$ and $(0,\tfrac{1}{2})$ components of $\gr_\varphi(C)$ are respectively $E\otimes \widetilde i$ and $E\otimes \widetilde j$. Since $\theta$ preserves the filtration on $\gr_\varphi(C)$ defined by the gauge $\varphi$, one has $\widetilde\theta(1\otimes \widetilde i)= \zeta_1\otimes \widetilde i$ for some $\zeta_1\in E$. Moreover, it is easy to see that $\zeta_1\otimes \widetilde i$ centralizes  $E$ since $1\otimes \widetilde i$ centralizes $E\otimes 1$ and $\widetilde\theta(1\otimes \widetilde i)=\zeta_1\otimes \widetilde i$. Therefore $\zeta_1\in F$. On the other hand, we have
\[
1\otimes \widetilde i= {\widetilde\theta}^2(1\otimes \widetilde i)= \widetilde\theta(\zeta_1\otimes \widetilde i)= (\theta_0(\zeta_1)\zeta_1)\otimes \widetilde i.
\]
This shows that $\theta_0(\zeta_1)\zeta_1=1$, and so $\zeta_1=\pm 1$. Likewise, $\widetilde\theta(1\otimes \widetilde j)=\zeta_2\otimes \widetilde j$ with $\zeta_2=\pm 1$. Thus, we have:
\[
\widetilde\theta\vert_{\gr_\varphi(Q)}=\left\{
\begin{array}{ll}
\gamma & \textrm{if $\zeta_1=\zeta_2=-1$},\\
\Int(\widetilde j)\circ \gamma & \textrm{if $\zeta_1=1, \zeta_2=-1$},\\
\Int(\widetilde i)\circ \gamma & \textrm{if $\zeta_1=-1, \zeta_2=1$},\\
\Int(\widetilde {ij})\circ \gamma & \textrm{if $\zeta_1=1, \zeta_2=-1$}.\\
\end{array}
\right.
\]
Hence, our arguments show that $\widetilde\theta = \theta_0\otimes(\Int(\widetilde u)\circ\gamma)$ where $u$ is one of the quaternion base elements $1,i,j,ij$ of $Q$.  It then follows by ~\cite[Thm. 2.6]{TW11} that the algebras with involution $(E\otimes_FQ,\theta)$ and $(E\otimes_FQ, \theta_0\otimes\Int(u)\circ\gamma)$ are isomorphic. This concludes the proof.
\end{proof}
From now on, we assume that  $(C,\theta)$ is totally decomposable. We want to show that  $(E, \theta_0)$ is totally decomposable. We first introduce the notion of an armature, which  originated in  ~\cite{Tig82} and ~\cite{TW87}. However, we shall refer frequently to the recent book ~\cite{TW14} that gives a more extensive treatment of armatures.   Let $A$ be a finite-dimensional  algebra over a field $F$. Recall from ~\cite[Def. 7.27]{TW14} that a subgroup $\cA\subset A\m/F\m$ is an \emph{armature} of $A$ if $\cA$ is abelian, $A$ has cardinality  $\vert \cA\vert=[A:F]$, and $\{a\in A \, \vert\, aF\m\in \cA\}$ spans $A$ as an $F$-vector space. For $a\in A^{\times}/F^{\times}$, we fix a representative  $x_a$ in $A$ whose image in $A^{\times}/F^{\times}$ is $a$, that is, $a=x_aF^{\times}$. In what follows, we always suppose $A$ is a central simple algebra of exponent $2$ over $F$, so the exponent of the group $\cA$ is $2$, that is, $x_a^2\in F\m$ for all $a\in \cA$.

For example, let $A$ be a quaternion algebra over $F$. The image in $A\m/F\m$ of the standard generators $i,j$ generate an armature of $A$.  More generally, in a tensor product of quaternion algebras the images of the product of standard generators generate an armature. Actually, there exists a characterization of tensor products of quaternion algebras in terms of the existence of armatures: a central simple algebra $A$ over $F$ of exponent $2$  has an armature if and only if $A$ is isomorphic to a tensor product of quaternion algebras over $F$ (see ~\cite[Cor. 7.34]{TW14}).

For an armature $\cA$ of a finite-dimensional $F$-algebra $A$ of exponent $2$,  there is an associated \emph{armature pairing} 
\[
\langle\,,\,\rangle : \cA\times\cA \to \mu_2(F)=\{\pm 1\} \quad \text{defined by}\quad  \langle a,b\rangle=x_ax_bx^{-1}_ax_b^{-1}.
\]
It is shown in ~\cite[Prop. 7.26]{TW14} that $\langle\,,\,\rangle$ is a well-defined alternating bimultiplicative pairing. The set $\{a_1,\ldots,a_r\}$ is called a base of $\cA$ if $\cA$ is  the internal direct product 
\[ 
\cA=(a_1)\times\cdots\times (a_r)
\]
where $(a_i)$ denotes  the cyclic subgroup of $\cA$ generated by $a_i$. If $\langle\,,\,\rangle$ is nondegenerate then $(\cA,\langle\,,\,\rangle)$ is a symplectic module and has a \emph{symplectic base} with respect to $\langle\,,\,\rangle$, i.e, a base $\{a_1,b_1,\ldots, a_n,b_n\}$ such that for all $i,j$
\[
\langle a_i,b_i\rangle= -1 ,\text{ where } \ord(a_i)=\ord(b_i)=2
\]
\[
\langle a_i,a_j\rangle=\langle b_i,b_j\rangle=1 \text{ and, if } i\ne j,\,\, \langle a_i,b_j\rangle=1
\]
(see ~\cite[Thm. 7.2]{TW14} or ~\cite[1.8]{Tig82}). Given any subgroup $\cB$ of $\cA$, let $F[\cB]$ denote the $F$-subspace (and subalgebra) of $A$ generated by the representatives of the elements of $\cB$. It is easy to check  that $\cB$ is an armature of $F[\cB]$.

Let $v$ be a valuation on $F$. Assume that $A$ has an armature $\cA$. Using the armature $\cA$,  an $F$-gauge on $A$ is constructed in ~\cite[\S 7.2.3]{TW14}:  a map $g_\cA\,: A\, \longrightarrow\, \Gamma$ is defined as follows. We choose an $F$-basis $(x_a)_{a\in \cA}$ of $A$ and set 
\[
g_\cA(\sum_{a\in \cA}\lambda_ax_a)=\min_{a\in \cA}(v(\lambda_a)+\tfrac{1}{2}v(x_a^2))\in \Gamma\cup\{\infty\} \text{ for } \lambda_a\in F,
\]
where we recall that $x_a^2\in F\m$ for all $a\in \cA$. One also defines a map $\overline{g}_\cA\,: \cA\longrightarrow \Gamma/\Gamma_F$ by
\[
\overline{g}_\cA(a)= g_\cA(x_a)+\Gamma_F.
\]
The map $\overline{g}_\cA$ is well-defined since $x_a$ is uniquely determined by $a$ up to a factor in $F$. The map $g_\cA$ is a $v$-gauge on $A$ and $\overline{g}_\cA$ is a surjective group homomorphism, see ~\cite[Lemma 7.46, Cor. 7.48 and Thm. 7.49]{TW14}. Following Tignol-Wadsworth's terminology in ~\cite{TW14}, the gauge $g_\cA$ is called the \emph{armature gauge} associated to $\cA$.

We now return to the central simple $L$-algebra with involution $(C,\theta)$ defined above. We fix the setting we will consider in the next proposition. Assume that $C$ has an armature $\cC$ and $\theta(x_a)=\pm x_a$ for all $a\in \cC$.  Let $g_\cC$ be the armature gauge associated to $\cC$. It is  clear from the definition of the armature gauge $g_\cC$ that $g_\cC(\theta(c))=g_\cA(c)$ and $g_\cC(\theta(c)c)=2g_\cC(c)$ for all $c\in C$. The $(t_1, t_2)$-adic valuation  of $L$ being Henselian, and $\theta$ being anisotropic, $g_\cC$ is  the unique  gauge on $C$ invariant under $\theta$ (see ~\cite[Thm. 2.2]{TW11}), that is,  $g_\cC=\varphi$ (where $\varphi$ is the gauge used in the proof of Lemma \ref{lm2.2}) . Set $\cC_0=\ker \overline{g}_\cC$. Since $\Gamma_C/\Gamma_F=(\tfrac{1}{2}\Z/\Z)^2$, it follows from the surjectivity of $\overline{g}_\cC$ that
\[
|\cC_0|=|\cC|/|(\tfrac{1}{2}\Z/\Z)^2|=\tfrac{1}{4}|\cC|=\dim_F E.
\]
We have the following result.
\begin{proposition}\label{prop2.1}
The algebras with involution $(C,\theta)$ and $(E, \theta_0)$ are as in Lemma \ref{lm2.2}.  Assume that $(C,\theta)$ is totally decomposable. Then $(E, \theta_0)$ is totally decomposable.
\end{proposition}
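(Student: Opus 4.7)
The strategy is to transport an armature of $C$ (coming from the total decomposability of $(C,\theta)$) down to an armature of $E$ via the kernel of $\overline{g}_\cC$, and then use this armature to recover a decomposition of $(E,\theta_0)$. First, write $(C,\theta)\simeq\bigotimes_{k=1}^{m}(H_k,\theta_k)$ where each $(H_k,\theta_k)$ is a quaternion algebra with involution of the first kind. In every factor one can choose generators $i_k,j_k$ with $\theta_k(i_k)=\pm i_k$ and $\theta_k(j_k)=\pm j_k$: if $\theta_k$ is symplectic it is the canonical involution and any pure quaternion generators work; if $\theta_k$ is orthogonal, write $\theta_k=\Int(s_k)\circ\gamma_k$ with $s_k$ pure and take $i_k=s_k$, together with $j_k$ any pure quaternion anticommuting with $i_k$ (so $\theta_k(i_k)=-i_k$ and $\theta_k(j_k)=j_k$). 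The images of the products of these generators form an armature $\cC$ of $C$ such that $\theta(x_a)=\pm x_a$ for all $a\in\cC$, placing us exactly in the framework described just before the proposition.

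With this armature, the associated armature gauge $g_\cC$ is $\theta$-invariant and satisfies $g_\cC(\theta(c)c)=2g_\cC(c)$; since $(L,v)$ is Henselian and $\theta$ is anisotropic, the $\theta$-special gauge is unique (\cite[Thm.~2.2]{TW11}), hence $g_\cC=\varphi$. In particular $\gr_{g_\cC}(C)_0=E$, with residue involution $\theta_0$. For each $a\in\cC_0:=\ker\overline{g}_\cC$, pick $\lambda_a\in L\m$ with $v(\lambda_a)=-\tfrac{1}{2}v(x_a^2)$ and set $y_a:=\widetilde{\lambda_a x_a}\in E$. Writing $x_ax_b=\mu(a,b)x_{ab}$ for the armature cocycle and using the identity $(x_ax_b)^2=\langle a,b\rangle x_a^2x_b^2$, a direct valuation computation gives $v(\lambda_a\lambda_b\mu(a,b)\lambda_{ab}^{-1})=0$, so $a\mapsto y_aF\m$ is a well-defined group homomorphism $\phi\colon\cC_0\to E\m/F\m$.

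The min formula defining $g_\cC$ on the basis $\{x_a\}_{a\in\cC}$, combined with the fact that $v|_F$ is trivial, shows that any nontrivial $F$-linear combination of the $y_a$ has $g_\cC$-value equal to $0$, and is hence nonzero in $E$; so $\phi$ is injective and the family $\{y_a\}_{a\in\cC_0}$ is $F$-linearly independent in $E$. Since $|\cC_0|=\dim_FE$, the image $\cE:=\phi(\cC_0)$ is an armature of $E$. Because $\theta(x_a)=\pm x_a$, we have $\theta_0(y_a)=\pm y_a$ for every $a\in\cC_0$. Choosing a symplectic base of $(\cE,\langle,\rangle)$, the corresponding pairs of representatives generate pairwise commuting quaternion subalgebras $Q'_i$ of $E$, each stable under $\theta_0$, yielding the decomposition $(E,\theta_0)\simeq\bigotimes_i(Q'_i,\theta_0|_{Q'_i})$. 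The main technical obstacle is the descent step: verifying that the residues $y_a$ multiply correctly modulo $F\m$ and remain $F$-linearly independent, which ultimately reduces to the valuation-compatibility of the armature gauge formulas from \cite{TW14}.
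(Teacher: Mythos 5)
Your proposal is correct and follows essentially the same route as the paper's proof: extract from a total decomposition of $(C,\theta)$ an armature $\cC$ with $\theta(x_a)=\pm x_a$, identify the armature gauge $g_\cC$ with $\varphi$ by uniqueness of the $\theta$-special gauge, and descend $\cC_0=\ker\overline{g}_\cC$ to an armature of $E=\gr_\varphi(C)_0$ compatible with $\theta_0$, then decompose via a symplectic base. The only difference is that where the paper simply cites \cite[Prop.~4.8]{TW10} and \cite[Cor.~2.8]{TW87} for this descent and the resulting quaternion decomposition, you verify the descent by hand through the residues $y_a$; that verification is sound (and for the symplectic base one should note the pairing on your $\cE$ is nondegenerate because $E$ is central simple over $F$).
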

\begin{proof}
Write $(C,\theta)=\bigotimes_{m=1}^{k}(Q_m, \theta_m)$ for some quaternion algebras $Q_m$ and some involutions $\theta_m$ on $Q_m$  (for $m=1,\ldots, k$). We choose the quaternion generators $i_m$ and $j_m$ of $Q_m$ such a way that $\theta(i_m)=\pm i_m$ and $\theta(j_m)=\pm j_m$. Let $\cC$ be the armature of $C$ associated to the generators $i_m, j_m$ for $m=1,\ldots, k$, that is, $\cC$ is the subgroup of $C\m/L\m$ generated by the image of $\{i_1, j_1, \ldots, i_k, j_k\}$. It is shown in ~\cite[Prop. 4.8]{TW10} that the degree $0$ component of $\gr_\varphi(C)$, which is $\gr_\varphi(C)_0=E$,  has an armature canonically isomorphic to $\cC_0$ with armature  pairing isometric to the restriction to $\cC_0$ of the armature pairing of $\cC$. We recall that the gauge $\varphi$ is nothing but the armature gauge associated to $\cC$ by uniqueness arguments. Since the center $F$ of $E$ is a field, ~\cite[Cor. 2.8]{TW87} indicates that $E$ is a tensor product of quaternion algebras. In other words, let $\{a_1, b_1,\ldots, a_{k-1}, b_{k-1} \}$ be a symplectic base of $\cC_0$ and let denote by $F[a_i, b_i]$ the quaternion subalgebra of $E$ generated by the representatives $x_{a_i}, x_{b_i}$ of $a_i, b_i$. The algebra $E$ is the tensor product $E=F[a_1, b_1]\otimes \cdots \otimes F[a_{k-1}, b_{k-1}]$.  Since $\theta(x_a)=\pm x_a$ for all $a\in \cC$,  the algebra with involution $(E,\theta_0)$ is totally decomposable. The proof is complete.
\end{proof}
\begin{lemma}\label{lm2.3}
Let $\theta_1$ and $\theta_2$ be $L$-linear anisotropic involutions on $M_2(L)$ and $Q$ respectively. Then there exist $\lambda_0\in F^\times$  such that
\[
(M_2(L), \theta_1) \otimes (Q,\theta_2)\simeq (M_2(F), \ad_{\langle\langle \lambda_0 \rangle\rangle})\otimes (Q,\theta_2).
\]
\end{lemma}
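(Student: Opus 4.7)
The plan is to realize the isomorphism explicitly by a swap-of-generators construction in the spirit of Lemma~\ref{lm3.2}.

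First I reduce to normal forms. Since $M_2(L)$ is split, any symplectic involution on it is hyperbolic, so the anisotropy of $\theta_1$ forces it to be orthogonal: $\theta_1\simeq\ad_{\langle\langle d\rangle\rangle}$ for some $d\in L^\times\setminus L^{\times 2}$. Using the $(t_1,t_2)$-adic valuation on $L$, I write $d = \lambda t_1^a t_2^b$ modulo squares with $\lambda\in F^\times$ and $a,b\in\{0,1\}$. If $(a,b)=(0,0)$, then $\theta_1$ is defined over $F$ and $\lambda_0 := \lambda$ works, so I assume $(a,b)\neq(0,0)$. By the valuation-theoretic argument underlying Lemma~\ref{lm2.2}, any anisotropic $L$-linear involution on $Q$ is isomorphic to $\Int(u_0)\circ\gamma$ for some $u_0\in\{1,i,j,ij\}$, so I may further assume $\theta_2$ has this form; a direct check then gives $\theta_2(i) = \pm i$ and $\theta_2(j) = \pm j$.

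Next, present $M_2(L)$ by generators $I,J$ with $I^2=1$, $J^2=d$, $IJ=-JI$, $\theta_1(I)=I$, $\theta_1(J)=-J$, and set $u := i^a j^b\in Q$, so $u^2 = \pm t_1^a t_2^b$. The key element is $J' := J\otimes u^{-1}\in M_2(L)\otimes_L Q$, which satisfies $(J')^2 = d/u^2 = \pm\lambda\in F^\times$ and anticommutes with $I\otimes 1$. The subalgebra $H\subset M_2(L)\otimes_L Q$ generated by $I\otimes 1$ and $J'$ is thus a quaternion algebra isomorphic to $(1,\pm\lambda)_F\otimes_F L\simeq M_2(F)\otimes_F L$. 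Its centralizer $H' = C_{M_2(L)\otimes_L Q}(H)$ has $L$-dimension four; a case-by-case check on $u\in\{i,j,ij\}$ (based on whether $i$ and $j$ commute or anticommute with $u$) produces two generators of $H'$ of the form $\eta_1\otimes i$ and $\eta_2\otimes j$ with $\eta_k\in\{1,I\}$, squaring to $t_1$ and $t_2$ and anticommuting. Hence $H'\simeq Q$, and under this identification the restriction of $\theta_1\otimes\theta_2$ acts on $i,j$ exactly as $\theta_2$ does, so $(H',\theta_1\otimes\theta_2|_{H'}) \simeq (Q,\theta_2)$.

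It remains to identify the restriction $\theta_1\otimes\theta_2|_H$. Writing $\theta_2(u) = \epsilon u$ with $\epsilon\in\{\pm 1\}$, one computes $(\theta_1\otimes\theta_2)(J') = -\epsilon J'$. When $\epsilon=1$ the restriction is the standard adjoint $\ad_{\langle\langle (J')^2\rangle\rangle}$ with discriminant $\pm\lambda\in F^\times$. When $\epsilon=-1$ the restriction fixes both $I$ and $J'$ and has the form $\Int(IJ')\circ\gamma_H$, where $\gamma_H$ is the canonical involution on $H$; its discriminant equals $-\Nrd(IJ') = \mp\lambda\in F^\times$. In either case $(H,\theta_1\otimes\theta_2|_H)\simeq \Ad_{\langle\langle\lambda_0\rangle\rangle}$ for some $\lambda_0\in F^\times$, and combining with the previous paragraph gives the desired decomposition. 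The main technical obstacle is the case $\epsilon = -1$: there the naive swap does not land on a standard adjoint form, and a reduced-norm computation is needed to confirm that the discriminant is still $F$-rational.
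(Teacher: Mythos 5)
Your proof is correct, but it follows a genuinely different route from the paper. The paper's argument is Hermitian-form theoretic: it identifies $\theta_1\otimes\theta_2$ with the adjoint involution of a binary Hermitian form $h=\langle 1,-\lambda\rangle$ over $(Q,\theta_2)$, rescales the second basis vector by a quaternion basis element (e.g.\ $i^{-1}$) to force $w(\lambda)=(0,0)$, and then invokes Hensel's lemma to replace $\lambda$ by a representative $\lambda_0\in F^\times$ of its residue class, giving $h\simeq\langle 1,-\lambda_0\rangle$ at once, with no case distinctions and no normal form needed for $\theta_1$. You instead work inside the algebra with an explicit generator swap in the spirit of Lemma~\ref{lm3.2}: after reducing $\theta_1$ to $\ad_{\langle\langle \lambda t_1^at_2^b\rangle\rangle}$ and $\theta_2$ to $\Int(u_0)\circ\gamma$ with $u_0\in\{1,i,j,ij\}$ (the latter reduction is legitimate, by the residue argument of Lemma~\ref{lm2.2} or by the classification of orthogonal involutions on $Q$ by their discriminant, which must lie in $\{t_1,t_2,-t_1t_2\}$ modulo squares), you absorb the ramified part of $\operatorname{disc}\theta_1$ into $Q$ via $J'=J\otimes u^{-1}$, check that the complementary quaternion pair $\eta_1\otimes i,\ \eta_2\otimes j$ reconstitutes $(Q,\theta_2)$, and identify the stable subalgebra $H=\langle I\otimes 1,J'\rangle$ with $\Ad_{\langle\langle\pm\lambda\rangle\rangle}$ by a type-and-discriminant computation in both sign cases $\theta_2(u)=\pm u$; all of these verifications are sound (and it is harmless that $d$ is only fixed modulo squares, since every identification you use depends only on square classes, though fixing $d=\lambda t_1^at_2^b$ exactly would make $(J')^2\in F^\times$ literal). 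What the paper's route buys is brevity and uniformity -- one rescaling plus Hensel's lemma, valid for arbitrary $\theta_2$ without normal forms or case analysis; what your route buys is an explicit, constructive isomorphism that stays within the armature-style toolkit already used in Lemma~\ref{lm3.2} and Proposition~\ref{prop3.1}, and it exhibits the scalar concretely as $\lambda_0=\pm\lambda$, the $F$-unit part of $\operatorname{disc}\theta_1$.
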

\begin{proof}
Setting $V=Q\times Q$, there is $\lambda\in L\m$ such that $\theta_1\otimes \theta_2$ is the adjoint of the binary Hermitian form $h=\langle 1, -\lambda\rangle: V\times V \to Q$ with respect to  $\theta_2$ on $Q$ (see for instance ~\cite[Thm. (4.2)]{KMRT98}). We claim that we may always suppose $w(\lambda)=(0,0)$. Indeed, multiplying $\lambda$ by a square if it is necessary, the valuation  $w(\lambda)$ is either $(0,0)$ or $(1,0)$ or $(0, 1)$ or $(1,1)$. For instance, assume $w(\lambda)=(1, 0)$. We set $\theta'=\theta_1\otimes \theta_2$ for simplicity.  Choose a quaternion base $1, i, j, ij$ of $Q$ such a way that $\theta'(1\otimes i)=\pm 1\otimes i$, $\theta'(1\otimes j)=\pm 1\otimes j$ and $i^2=t_1$. Let $e_1, e_2$ be the base of $V$ corresponding to the diagonalization $h=\langle 1, -\lambda \rangle$.  Replacing the base $e_1, e_2$ by $e_1, i^{-1}e_2$, we get the isometry $h=\langle 1, -\lambda \rangle\simeq\langle 1, -\theta' (1\otimes i)^{-1}\lambda(1\otimes i)^{-1} \rangle$. Moreover, we have  $w(\theta' (1\otimes i)^{-1}\lambda(1\otimes i)^{-1})=0$. Hence, we may always suppose $w(\lambda)=(0,0)$ as claimed. 

Since $w(\lambda)=(0,0)$, one has $\bar\lambda=\lambda_0$ for some $\lambda_0\in F\m$. Thus, $\overline{\lambda\lambda_0\inv}=1$.  By Hensel's lemma it follows that there exists $l\in L\m$ such that $\lambda_0l^2=\lambda$. Hence, $\langle 1, -\lambda \rangle= \langle 1, -\lambda_0l^2 \rangle\simeq \langle 1, -\lambda_0 \rangle$ for some $\lambda_0\in F\m$. So
\[
(M_2(L)\otimes Q, \theta')\simeq (M_2(F), \ad_{\langle\langle \lambda_0\rangle\rangle})\otimes (Q, \theta_2).
\]
The lemma is proved.
\end{proof}

\medskip
\begin{proof}[Proof of Theorem \ref{thm2.1}]
Assume that $(S_2,\tau)$ is totally decomposable with $n$ split factors, say
\begin{equation*}
(S_2,\tau)\simeq \bigotimes_{k=1}^{n}(M_2(L),\tau_k)\otimes(B,\eta),
\end{equation*}
for some involutions $\tau_k$ on $M_2(L)$ and some totally decomposable involution $\eta$ on the centralizer $B$ of $M_{2^n}(L)$ in $S_2$. 

We first show that  $B=B_0\otimes Q$ for some subalgebra $B_0$ of $B$ defined over $F$.  For this, let $D$ be the division algebra Brauer equivalent  to $S$. The algebra $S_2=S\otimes Q$ is Brauer equivalent to $B\sim D\otimes Q$. Hence $B\simeq M_r(D)\otimes Q$ for some $r\in \N$. It then follows that $B\simeq B_0\otimes Q$ with $B_0\simeq M_r(D)$ defined over $F$.

 As in Lemma \ref{lm2.2}, notice that there exists  a $\eta$-invariant gauge on $B$ and 
\[
(B,\eta)\simeq (B_0,\eta_0)\otimes (Q,\Int(u)\circ\gamma)
\]
for some $u\in Q\m$, where $\eta_0$ is the induced involution on $B_0$ by this gauge and $\gamma$ is the canonical involution on $Q$. Thus, we have
\[
(S_2,\tau)\simeq \bigotimes_{k=1}^{n}(M_2(L),\tau_k)\otimes(B_0,\eta_0)\otimes (Q,\Int(u)\circ\gamma).
\]
Moreover the involution $\eta_0$ is totally decomposable by Prop.~\ref{prop2.1} since $\eta$ is totally decomposable. Applying repeatedly  Lemma \ref{lm2.3},  one finds  $\lambda_1,\ldots \lambda_n\in F\m$ such that
\[
(S_2,\tau)\simeq \bigotimes_{k=1}^{n}(M_2(F),\ad_{\langle\langle \lambda_k \rangle\rangle})\otimes(B_0,\eta_0)\otimes (Q,\Int(u)\circ\gamma).
\]
Consider the gauge $g_2$ on $S_2$ constructed in Example \ref{example}. The residue algebra with involution $\gr_{g_2}(S_2, \tau)_0$  is  
\[
\gr_{g_2}(S_2, \tau)_0\simeq (S,\rho)\simeq  \bigotimes_{k=1}^{n}(M_2(F),\ad_{\langle\langle \lambda_k \rangle\rangle})\otimes(B_0,\eta_0).
\]
Since $(B_0,\eta_0)$ is totally decomposable,  $(S,\rho)$ is totally decomposable with $n$ split factors. That concludes the proof.
\end{proof}
Now, let $(A',\sigma')$ be the algebra with orthogonal involution of Prop.~\ref{sivatski}. Notice that $A'\otimes Q$ is of degree $16$ and index $8$, and it is totally decomposable with $1$ split factor since $A'$ is totally decomposable with $1$ split factor. It is also clear that $(A',\sigma')\otimes (Q,\rho)$ is totally decomposable. But, we have the following:
\begin{corollary}\label{cor5.1}
The algebra with involution $(A',\sigma')\otimes (Q,\rho)$ is not totally decomposable with $1$ split factor.
\end{corollary}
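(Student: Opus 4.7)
The plan is to deduce the corollary directly from Theorem~\ref{thm2.1} by a contradiction argument. Assume, toward a contradiction, that $(A',\sigma')\otimes (Q,\rho)$ is totally decomposable with one split factor. I would then identify this situation with the setup preceding Theorem~\ref{thm2.1}: take $S = A'$, let the anisotropic orthogonal involution $\sigma'$ of Proposition~\ref{sivatski} play the role of the involution denoted $\rho$ in the theorem's statement, and let the involution $\rho$ appearing in the corollary play the role of $\rho'$ on $Q$. All the standing hypotheses are satisfied, since $(A',\sigma')$ is anisotropic and totally decomposable by Sivatski's construction, and $Q=(t_1,t_2)$ over $L=F((t_1))((t_2))$ is the quaternion algebra fixed in Section~\ref{statement}.

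With $n=1$, Theorem~\ref{thm2.1} would then force $(A',\sigma')$ itself to be totally decomposable with one split factor, contradicting Proposition~\ref{sivatski}. This contradiction proves the corollary. I do not expect any genuine obstacle here, because the substantive work has been packaged into Theorem~\ref{thm2.1} (via the gauge-theoretic analysis of $\tau$-invariant gauges in Proposition~\ref{prop01}, the decomposition Lemma~\ref{lm2.2}, the armature argument of Proposition~\ref{prop2.1}, and the normalization Lemma~\ref{lm2.3}). The role of Corollary~\ref{cor5.1} is simply to promote Sivatski's degree-$8$ counterexample to a degree-$16$, index-$8$, co-index-$2$ counterexample with the same defect. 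As indicated in Remark~\ref{rem5.1}, iterating the construction by tensoring further with quaternion algebras built from additional pairs of indeterminates would extend the phenomenon to arbitrarily large index $2^r$ with $r\geq 3$.
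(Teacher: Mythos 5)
Your proposal is correct and is essentially identical to the paper's own proof: assume $(A',\sigma')\otimes(Q,\rho)$ is totally decomposable with one split factor, apply Theorem~\ref{thm2.1} with $n=1$ (taking $S=A'$, $\rho=\sigma'$), and contradict Proposition~\ref{sivatski}. Nothing further is needed.
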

\begin{proof}
Assume that $(A',\sigma')\otimes (Q,\rho)$ is totally decomposable with $1$ split factor. It follows from Thm.~\ref{thm2.1} that $(A',\sigma')$ is totally decomposable with $1$ split factor; that is impossible. Therefore $(A',\sigma')\otimes (Q,\rho)$ is not totally decomposable with $1$ split factor.
\end{proof}
\begin{remark}\label{rem5.1}
 Combining Corollaries \ref{cor4.1} and \ref{cor5.1}, one can construct  examples of  totally decomposable algebras with involution of index $2^r$, where $r\geq 2$ for orthogonal involutions and $r\geq 3$ for symplectic involutions, and co-index $2^k$ with $k\geq 1$ that are not  totally decomposable with $k$ split factors.
\end{remark}
\subsection*{Acknowledgements}
I am very grateful to Jean-Pierre Tignol for triggering the idea presented in this paper and for his availability. I would like to thank  Anne Qu\'eguiner-Mathieu for inspiring discussion and for the financial support provided by the French Agence Nationale de la Recherche (ANR) under reference ANR-12-BL01-0005. I further gratefully acknowledge the financial support provided by the French Embassy in Bamako  (Grant N$^\text{o}$ 830592G (2014)).

\end{document}